\newtheorem{theorem}{Theorem}[section]
\newtheorem{corollary}[theorem]{Corollary}
\newtheorem{proposition}[theorem]{Proposition}
\theoremstyle{definition}
\newtheorem{definition}{Definition}
\newtheorem{remark}[theorem]{Remark}
\numberwithin{equation}{section}
\def\RR{\mathbb{R}} % the real number R
\def\CC{\mathbb{C}} % Complex C
\def\NN{\mathbb{N}} % Natural number N
\def\ddbar{\partial\bar\partial} %
\def\ddc{{i\partial\bar\partial}} %
\def\ric{\mathrm{Ric}}
\def\ke{K\"ahler-Einstein }
\def\ma{Monge-Amp\`{e}re }
\newcommand{\paren}[1]{\left(#1\right)}
\newcommand{\pd}[2]{\frac{\partial#1}{\partial#2}}
\newcommand{\abs}[1]{\left\vert#1\right\vert}
\newcommand{\norm}[1]{\left\|#1\right\|}
\newcommand{\ov}[1]{\overline{#1}}
\newcommand{\inner}[1]{\left\langle{#1}\right\rangle}
\begin{document}
%------------------------------------------------------------------
\title[Fiberwise K\"{a}hler-Ricci flows]{Fiberwise K\"{a}hler-Ricci flows on families of bounded strongly pseudoconvex domains}

\author{Young-Jun Choi}
\address{Department of Mathematics, Pusan National University, 2, Busandaehak-ro 63beon-gil, Geumjeong-gu, Busan 46241, Republic of Korea}
\email{youngjun.choi@pusan.ac.kr}

\author{Sungmin Yoo}
\address{Center for Geometry and Physics, Institute for Basic Science (IBS), Pohang 37673, Republic of Korea}
\email{sungmin@ibs.re.kr}

\subjclass[2010]{53C55, 32G05, 32T15}
\keywords{K\"ahler-Ricci flow, positivity, K\"ahler-Einstein metric, fiberwise K\"ahler-Ricci flow, a family of strongly pseudoconvex domains}

\begin{abstract}
Let $\pi:\CC^n\times\CC\rightarrow \CC$ be the projection map onto the second factor and let $D$ be a domain in $\CC^{n+1}$ such that for $y\in\pi(D)$, every fiber $D_y:=D\cap\pi^{-1}(y)$ is a smoothly bounded strongly pseudoconvex domain in $\CC^n$ and is diffeomorphic to each other.
By Chau's theorem, the K\"ahler-Ricci flow has a long time solution $\omega_y(t)$ on each fiber $X_y$.
This family of flows induces a smooth real (1,1)-form $\omega(t)$ on the total space $D$ whose restriction to the fiber $D_y$ satisfies $\omega(t)\vert_{D_y}=\omega_y(t)$.
In this paper, we prove that $\omega(t)$ is positive for all $t>0$ in $D$ if $\omega(0)$ is positive.
As a corollary, we also prove that the fiberwise K\"ahler-Einstein metric is positive semi-definite on $D$ if $D$ is pseudoconvex in $\CC^{n+1}$.
\end{abstract}

\maketitle

\section{Introduction}

Let $D$ be a domain in $\CC^{n+1}$ and $S:=\pi(D)\subset\CC$, where $\pi:\CC^n\times\CC\rightarrow\CC$ is the standard projection map onto the second factor. 
We say that $D$ is a \emph{holomorphic family of bounded strongly pseudoconvex domains} if it satisfies the following:
\begin{itemize}

\item[(\romannumeral1)] $\pi^{-1}(S)\cap\partial D$ is smooth  and $\pi\vert_{\partial D}:\pi^{-1}(S)\cap\partial D\rightarrow S$ is a submersion. 

\item[(\romannumeral2)] For $y\in S$, all fibers $D_y:=\pi^{-1}(y)\cap D$ are smoothly bounded strongly pseudoconvex domains in $\CC^{n}$.

\end{itemize}
In this case, there exists a defining function $r$ of $D$ such that $\omega:=i\partial\ov\partial(-\log(-r))$ is a $d$-closed smooth real $(1,1)$-form on $D$ whose restriction to the fibers $\omega\vert_{D_y}$ is a complete K\"ahler metric with bounded geometry (see Section \ref{S:initial}).

Now we consider the following (normalized) K\"ahler-Ricci flow on each fiber $D_y$:
\begin{equation}\label{E:KR_flow}
\begin{aligned}
\frac{\partial}{\partial t}\omega_y(t)
&=
-\ric(\omega_y(t))-(n+1)\omega_y(t),\medskip\\
\omega_y(0)
&=
\omega|_{D_y}.
\end{aligned}
\end{equation}
This flow has a long time solution $\omega_y(t)$ which converges to the unique complete K\"ahler-Einstein metric $\omega^{KE}_y$ with Ricci curvature $-(n+1)$ as $t\rightarrow\infty$ by Chau's theorem in \cite{Chau}.
In fact, $\omega_y(t)$ is given by the solution of a parabolic \ma equation.
As a consequence of the implicit function theorem for the \ma operator, we obtain smooth real (1,1)-forms $\omega(t)$ on the total space $D$ whose restriction to the fibers $D_y$ satisfies $\omega(t)|_{D_y}=\omega_y(t)$ (see Proposition \ref{P:Regularity}).
Moreover, $\omega(t)$ evolves by the following equation, called the \emph{fiberwise K\"ahler-Ricci flow}:
\begin{equation}
\begin{aligned}
\frac{\partial}{\partial t}\omega(t)
&=
\Theta_{\omega(t)}-(n+1)\omega(t),\medskip\\
\omega(0)
&=
\omega,
\end{aligned}
\end{equation}
where $\Theta_{\omega(t)}$ is the relative curvature form of $\omega(t)$ (Theorem \ref{T:Relative Kahler-Ricci flow}).
This flow was first introduced by Berman in \cite{Berman} for the case of compact fibrations with the name ``\emph{relative K\"ahler-Ricci flow}".
The main theorem of this paper is a non-compact version of Berman's theorem (cf. Corollary 4.9 in \cite{Berman}).

\begin{theorem}\label{T:Main}
If $\omega$ is semi-positive in $D$ and strictly positive at least one point on each fiber $D_y$, then $\omega(t)$ is positive in $D$ for all $t>0$.
\end{theorem}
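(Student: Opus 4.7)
Chau's theorem already gives that $\omega(t)|_{D_y}$ is a genuine K\"ahler metric on each fiber, so to prove $\omega(t)>0$ on $D$ it suffices to control a single ``horizontal'' scalar. In local coordinates $(z^1,\dots,z^n,y)$ adapted to $\pi$, write $\omega(t)=\omega_{i\ba j}\,dz^i\wedge d\ba{z}^j+\omega_{i\ba y}\,dz^i\wedge d\ba y+\omega_{y\ba j}\,dy\wedge d\ba{z}^j+\omega_{y\ba y}\,dy\wedge d\ba y$ and consider the Schur complement
\[
c(t):=\omega_{y\ba y}(t)-\omega^{\ba j i}(t)\,\omega_{y\ba j}(t)\,\omega_{i\ba y}(t),
\]
which is the squared $\omega(t)$-norm of the horizontal lift of $\partial/\partial y$ and is precisely the ``geodesic curvature'' of $\pi$ in Berman's terminology. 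Since the vertical block $(\omega_{i\ba j})$ is positive definite, $\omega(t)>0$ on $D$ if and only if $c(t)>0$. The hypothesis translates to $c(0)\geq 0$ on $D$ with $c(0)>0$ at some point of each $D_y$.

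\textbf{Parabolic inequality for $c(t)$.} My plan is to differentiate the identity $\omega(t)|_{D_y}=\omega_y(t)$ in the base direction and combine the result with $\partial_t\omega(t)=\Theta_{\omega(t)}-(n+1)\omega(t)$ from Theorem \ref{T:Relative Kahler-Ricci flow}. Mimicking the Bochner-type calculation of Berman in \cite{Berman} for the compact case, I expect to obtain, on each fiber $D_y$, a parabolic inequality of the form
\[
\Bigl(\frac{\partial}{\partial t}-\Delta_{\omega_y(t)}\Bigr)c(t)\ \geq\ -(n+1)\,c(t)+\bigl|\ov{\partial}V(t)\bigr|^2_{\omega_y(t)},
\]
where $V(t)$ is the horizontal lift of $\partial/\partial y$ regarded as a fiberwise vector field. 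The $-(n+1)c(t)$ term is harmless (a Gr\"onwall-type multiplier $e^{(n+1)t}$ absorbs it), while the nonnegative Bochner term $|\ov\partial V|^2$ is what drives positivity propagation.

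\textbf{Maximum principle on non-compact fibers.} The key difficulty, absent in Berman's compact setting, is that $D_y$ is non-compact and $\omega_y(t)$ is complete of infinite volume, so the classical parabolic minimum principle is not directly available. My plan is a parabolic Omori-Yau-type argument on each fiber: Chau's estimates give bounded geometry of $\omega_y(t)$ uniform on compact $t$-intervals, and the explicit formula $\omega=\ddc(-\log(-r))$ near $\bdry{D}$ gives tractable control of $c(t)$ as one approaches the boundary. Concretely, I would exhaust $D_y$ by the sublevel sets $\{r<-\varepsilon\}$, apply the classical parabolic minimum principle on each to $e^{(n+1)t}c(t)+\delta\,\chi$ for a barrier $\chi$ built from $-\log(-r)$, and pass to $\varepsilon\to 0$, $\delta\to 0$ to deduce $c(t)\geq 0$ for all $t>0$. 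Finally, the strong maximum principle applied to the parabolic inequality, together with the hypothesis that $c(0)>0$ somewhere on each fiber, upgrades $c(t)\geq 0$ to $c(t)>0$ throughout $D$ for every $t>0$, as claimed.

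The step I expect to be the main obstacle is the last one: rigorously justifying the Omori-Yau/barrier argument, since $c(t)$ is not known a priori to extend continuously to $\bdry{D}$, and one must choose the barrier $\chi$ so that it dominates the horizontal cross-terms $\omega^{\ba j i}(t)\omega_{y\ba j}(t)\omega_{i\ba y}(t)$ uniformly as $r\to 0$ while cooperating (rather than conflicting) with the linear $-(n+1)c$ term along the flow.
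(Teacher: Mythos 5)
Your first two steps coincide with the paper's: the reduction to the geodesic curvature $c(t)$ (Remark \ref{R:geo_curv}) and the parabolic identity it satisfies, which is exactly Theorem \ref{berman_flow} (with equality, the right-hand side being $\norm{\ov\partial v_{\omega(t)}}^2-(n+1)c$), and the final upgrade from $c\ge 0$ to $c>0$ via the strong maximum principle is also the same. The genuine gap is in your third step, the weak minimum principle on the non-compact fibers. You propose to exhaust $D_y$ by $\{r<-\varepsilon\}$ and apply the classical parabolic minimum principle to $e^{(n+1)t}c(t)+\delta\chi$ with a barrier $\chi$ built from $-\log(-r)$. For this to close you must have $e^{(n+1)t}c(t)+\delta\chi\ge 0$ on the lateral boundary $\{r=-\varepsilon\}\times[0,T]$, and nothing a priori prevents $c(t)$ from being negative there. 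The best boundary control one can actually establish (and the one the paper proves in Proposition \ref{P:Estimate_geo_t}, using the uniform bounds on the $s$-derivatives of $\varphi$ in the Cheng--Yau spaces from Remark \ref{R:Boundedness of u} together with the coordinate scaling $\partial/\partial z^{\ba\beta}\sim(-r)^{-1}\partial/\partial\xi^j$) is $\abs{c(\omega(t))-c(\omega)}\le C/(-r)$, so the negative part of $c(t)$ may grow like $(-r)^{-1}$ near $\partial D_y$. A logarithmic barrier multiplied by $\delta\to 0$ cannot dominate that; you would need $\chi\gtrsim(-r)^{-1}$, and you would then additionally have to verify that such a $\chi$ is a supersolution of $\partial_t-\Delta_t$ for the \emph{evolving} metric, which requires further estimates you have not supplied. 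As you yourself flag, this is the main obstacle, and as stated your plan does not surmount it.

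The paper avoids the barrier construction altogether: it invokes Ni's integral-type weak maximum principle (Theorem \ref{T:Maximum_Principle}), which only requires the weighted integrability condition $\int_0^T\int_\Omega(-r)^b(c_-)^2\,dV_t\,dt<\infty$ for some $b>0$. That condition is then verified by the polynomial growth estimate $\abs{c(\omega(t))}=O((-r)^{-2})$, obtained from \eqref{E:initial_growth} and Proposition \ref{P:Estimate_geo_t}, combined with the volume comparison $dV_t\lesssim(-r)^{-(n+1)}dV$. So the quantitative boundary estimate you allude to informally is in fact the essential missing ingredient, and once you have it the natural tool is an integral (Ni-type) maximum principle rather than a pointwise barrier; to salvage your exhaustion argument you would have to build a barrier with at least the growth $(-r)^{-1}$ and prove the corresponding supersolution inequality for $\Delta_{\omega_y(t)}$ uniformly in $t$.
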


On the other hand, the family of K\"ahler-Einstein metrics $\omega^{KE}_y$ on $D_y$ also induces a $d$-closed smooth real $(1,1)$-form $\rho$ on the total space $D$.  
The form $\rho$ is called the \emph{fiberwise K\"ahler-Einstein metric} since it satisfies $\rho|_{D_y}=\omega^{KE}_y$ (cf. \cite{Choi1,Schumacher}).
Using the fact that $\omega_y(t)$ converges to $\omega^{KE}_y$ on each fiber $D_y$, one can show that the solution of the fiberwise K\"{a}hler-Ricci flow $\omega(t)$ smoothly converges to the fiberwise K\"{a}hler-Einstein metric $\rho$ on the total space $D$ (Theorem \ref{T:convergence of fiberwise}).
Since the existence of initial form $\omega$ satisfying the hypothesis in Theorem \ref{T:Main} is guaranteed provided that $D$ is pseudoconvex in $\CC^{n+1}$ (Proposition \ref{P:Pseudoconvex}),
we have the following

\begin{corollary}\label{C:Main}
The fiberwise K\"ahler-Einstein metric $\rho$ is semi-positive if $D$ is pseudoconvex.
\end{corollary}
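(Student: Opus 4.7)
The plan is to combine the three main ingredients that the introduction has already set up: the existence of a ``good'' initial datum for the flow (Proposition \ref{P:Pseudoconvex}), the positivity preservation along the flow (Theorem \ref{T:Main}), and the long-time convergence of the flow to the fiberwise K\"ahler-Einstein metric (Theorem \ref{T:convergence of fiberwise}). The corollary is then extracted by passing positivity through the smooth limit.

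First I would invoke Proposition \ref{P:Pseudoconvex}: since $D$ is pseudoconvex in $\CC^{n+1}$, there exists a defining function $r$ of $D$ such that $\omega:=i\partial\bar\partial(-\log(-r))$ is a smooth real $(1,1)$-form on $D$ which is semi-positive on $D$ and which restricts to the complete K\"ahler metric $\omega|_{D_y}$ on each fiber (in particular, it is strictly positive along the fiber directions, hence at some point of each fiber). This choice of $\omega$ satisfies exactly the hypothesis of Theorem \ref{T:Main}.

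Next I would apply Theorem \ref{T:Main} with this initial form. The theorem gives that the solution $\omega(t)$ of the fiberwise K\"ahler-Ricci flow with initial value $\omega(0)=\omega$ is strictly positive on all of $D$ for every $t>0$. Finally I would invoke Theorem \ref{T:convergence of fiberwise} to conclude that $\omega(t)\to\rho$ smoothly on $D$ as $t\to\infty$, where $\rho$ is the fiberwise K\"ahler-Einstein metric. Since each $\omega(t)$ is a positive smooth $(1,1)$-form on $D$ and the smooth limit of positive $(1,1)$-forms is semi-positive (the nonnegativity of $\omega(t)(v,\bar v)$ at any point, for any tangent vector $v$, is preserved under pointwise limits), the form $\rho$ is semi-positive on $D$, proving the corollary.

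Because every nontrivial step has been packaged into the earlier theorems, there is no real analytic obstacle left here; the deduction is a short limiting argument. The only point that deserves a one-line check is that ``positive for all $t>0$'' plus smooth convergence yields ``semi-positive'' and not necessarily ``positive'' in the limit, so one cannot upgrade the conclusion of the corollary to strict positivity without further input. Any attempt to strengthen the statement to $\rho>0$ would be the genuine difficulty and would require additional hypotheses on $D$ (for instance, strict pseudoconvexity of the total space or a uniform lower bound as $t\to\infty$), which is why the corollary is stated in the semi-positive form.
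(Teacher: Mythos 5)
Your proof is correct and is exactly the paper's argument: Proposition \ref{P:Pseudoconvex} supplies an admissible initial form $\omega$, Theorem \ref{T:Main} gives strict positivity of $\omega(t)$ for all $t>0$, and Theorem \ref{T:convergence of fiberwise} lets semi-positivity pass to the smooth limit $\rho$. One small caveat: strict positivity of $\omega$ at a point of each fiber does \emph{not} follow from positivity of the restriction $\omega|_{D_y}$ (which says nothing about the base direction), so you should cite that property directly from the second bullet of Proposition \ref{P:Pseudoconvex} rather than derive it parenthetically as you do.
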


Corollary \ref{C:Main} has already proved by the first named author in \cite{Choi1, Choi2}.
In fact, he further proved that $\rho$ is strictly positive if $D$ is strongly pseudoconvex.
In his papers \cite{Choi1, Choi2}, he analyzed the boundary behavior of the variation of K\"ahler-Eintein metrics via the boundary behavior of K\"ahler-Einstein metric due to Cheng and Yau.
It is remarkable to note that the analysis for Corollary \ref{C:Main} in this paper is lighter than the one in \cite{Choi1, Choi2}.
\medskip

A study on the positive variation of K\"ahler-Einstein metrics is first developed by Schumacher \cite{Schumacher}.
More precisely, he has proved that the variation of K\"ahler-Einstein metrics on a family of canonically polarized compact K\"ahler manifolds is positive-definite on the total space.
In \cite{Schumacher}, he showed that the geodesic curvature of the fiberwise K\"ahler-Einstein metric, which measures the positivity along the horizontal direction, satisfies a certain elliptic partial differential equation. 
A direct application of maximum principle says that the geodesic curvature is positive, which is equivalent to the positivity of the fiberwise K\"ahler-Einstein metric.

Later, Berman \cite{Berman} proved the parabolic version of Schumacher's result in the same setting.
On a canonically polarized compact K\"ahler manifold, the K\"ahler-Ricci flow has a long time solution which converges to the unique K\"ahler-Einstein metric by Cao's theorem in \cite{Cao}.
Using this result, Berman constructed the relative K\"ahler-Ricci flow on a family of canonically polarized compact K\"ahler manifolds. 
In \cite{Berman}, he proved the geodesic curvature of the relative K\"ahler-Ricci flow satisfies a parabolic version of Schumacher's elliptic PDE.
A parabolic maximum principle implies that the positivity of the relative K\"ahler-Ricci flow is preserved.
In particular, Berman's result implies the Schumacher's one since the relative K\"ahler-Ricci flow converges to the fiberwise K\"ahler-Einstein metric.
\medskip

In this paper, we shall generalize Berman's results to a family of bounded strongly pseudoconvex domains, which is one of the most important examples for non-compact complete K\"ahler manifolds.
In this case, the K\"ahler-Ricci flow has a long time solution which converges to the unique K\"{a}hler-Einstein metric due to Chau \cite{Chau}.
Moreover, the geodesic curvature of the fiberwise K\"{a}hler-Ricci flow still satisfies Berman's parabolic PDE.
The difference comes from applying the parabolic maximum principle.
In the previous case, since every fiber is compact, we can apply the standard weak and strong parabolic maximum principle.
However, if the manifold is non-compact, the weak maximum principle does not hold in general.

To resolve this problem, we will use Ni's theorem in \cite{Ni}, which says that if the function does not blow up too fast at the point at infinity, then the weak maximum principle holds.
To apply this, we have to investigate the boundary behavior of the geodesic curvature of the fiberwise K\"ahler-Ricci flow.
In fact, we will show that it has a polynomial growth near the boundary with respect to the defining function.
\medskip

Throughout this paper, $z=(z^1,\ldots,z^n)$ will be a holomorphic local coordinate system for the fibers $D_y\subset\CC^n$.
For the base space $S\subset\CC$, we will always use the standard Euclidean coordinate, denoted by $s$.
We will use small Greek letters, $\alpha,\beta,\dots=1,\dots,n$ for indices on $z$ unless otherwise specified. 
For a properly differentiable function $f$ on the total space $D\subset\CC^n\times\CC$, we denote by
\begin{equation} \label{E:convention}
f_\alpha
    =\pd{f}{z^\alpha},\;\;
f_{\bar\beta}
    =\pd{f}{z^{\bar\beta}},\;\;
   \;\;\text{and}\;\;
f_s
    =\pd{f}{s},\;\;
f_{\bar s}
    =\pd{f}{\bar s},
\end{equation}
where $z^{\bar\beta}$ mean $\ov{z^\beta}$.
We will always use the Einstein convention and the same letter $``C"$ to denote a generic constant, which may change from one line to another, but it is independent of the pertinent parameters involved.
\medskip

\noindent\textbf{Acknowledgement.}
The first named author is grateful to R. Berman for suggesting this problem and sharing his ideas.
He was supported by the National Research Foundation
(NRF) of Korea grant funded by the Korea government 
(No. 2018R1C1B3005963).
The work of second named author was supported by IBS-R003-D1. 

\section{Preliminaries}

A compact K\"ahler manifold with negative first Chern class admits an unique \ke metric with negative constant Ricci curvature by Aubin \cite{Aubin} and Yau \cite{Yau} using the continuity method.
Later, Cao \cite{Cao} gave another proof of the existence of K\"{a}hler-Einstein metric using the K\"{a}hler-Ricci flow.

On the other hand, these results can be generalized to non-compact complete K\"{a}hler manifolds which admit properties of bounded geometry due to Cheng-Yau and Chau.
In this section, we recaptulate their results (for the details, see \cite{Chau,Cheng_Yau}).

\subsection{Elliptic \ma equation and K\"{a}hler-Einstein metric}
The existence of K\"{a}hler-Einstein metric comes from the solvability of the complex Monge-Amp\`ere equation. For that purpose, Cheng and Yau introduced the notion of the bounded geometry of non-compact complete K\"{a}hler manifold.

\begin{definition} [Bounded geometry]\label{D:bounded geometry}
Let $(M,\omega)$ be a complete K\"{a}hler manifold of dimension $n$. 
We say that $(M,\omega)$ has \emph{bounded geometry of order $k$} if for each $p\in M$ there exists a holomorphic chart $(U_p,\xi_p)$ centered at $p$ satisfying following conditions:
\begin{itemize}
\item [(\romannumeral1)] There exist constant $r>0$, independent of $p$ satisfying
	\begin{equation*}
	\mathbb{B}_{r}(0)\subset V_p:=\xi_p(U_p)\subset \mathbb{C}^n,
	\end{equation*}
	where $\mathbb{B}_{r}(0)$ denotes the ball of radius $r$ centered at $0$ in $\mathbb{C}^n$.
\item [(\romannumeral2)] There exists a constant $C>0$ independent of $p$ satisfying
	\begin{equation*}
	\frac{1}{C}\paren{\delta_{\alpha\bar\beta}}\leq\paren{g_{\alpha\bar\beta}}\leq C\paren{\delta_{\alpha\bar\beta}},
	\end{equation*}
	where $\omega=i g_{\alpha\bar\beta}d\xi^{\alpha}\wedge d\ov{\xi^{\beta}}$ for the coordinates $\xi_p=(\xi^1,\ldots,\xi^n)$.
\item [(\romannumeral3)] For any $l\leq k$, there exist constants $C_{l}>0$ independent of $p$ satisfying
	\begin{equation*}
	\norm{g_{\alpha\bar\beta}}_{C^{l}({V_p})}\leq C_{l}.
	\end{equation*}
\end{itemize}
\end{definition}

Suppose that a complete K\"{a}hler manifold $(M,\omega)$ has bounded geometry of order $k$.
Let $\{(U_p,\xi_p)\}$ be a family of holomorphic charts covering $M$ and satisfying the conditions in Definition \ref{D:bounded geometry}.
For any functions $u\in C^{\infty}(M)$, we define a norm by
$$
\norm{u}_{k+\epsilon}:=\sup_{p\in M}\{\norm{u\circ\xi_p^{-1}}_{C^{k+\epsilon}({V_p})}\},
$$
where $\norm{\cdot}_{C^{k+\epsilon}({V_p})}$ is the standard elliptic H\"{o}lder norm on $V_p:=\xi_p(U_p)\subset\mathbb{C}^n$.
We denote the Banach completion of the space $\{u\in C^{\infty}(M): \norm{u}_{k+\epsilon}<\infty\}$ by $\tilde{C}^{k+\epsilon}(M)$.
%Then one can check that the \emph{elliptic H\"{o}lder space} $\tilde{C}^{k+\epsilon}(M)$ is a Banach space.

Now we can state the following theorem due to Cheng and Yau.

\begin{theorem} [Theorem 4.4 in \cite{Cheng_Yau}] \label{T:CY1}
Suppose $(M,\omega)$ is a complete K\"{a}hler manifold with bounded geometry of order $k\ge5$. 
Then, for any $K>0$ and $F\in\tilde{C}^{k-2+\varepsilon}(M)$, there exists a unique $\psi\in\tilde{C}^{k+\varepsilon}(M)$ satisfying the following conditions:
\begin{align} 
(\omega+i\partial\ov\partial\psi)^n
=e^{K\psi+F}\omega^n, \label{E:Elliptic Monge-Ampere}\\
\frac{1}{C}\omega\leq\omega+i\partial\ov\partial\psi\leq C\omega. \label{E:Quasi isometry}
\end{align}
Moreover, if all the data are analytic, the solution is also analytic.
\end{theorem}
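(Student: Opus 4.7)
The plan is to use the continuity method. I would introduce the family
\begin{equation*}
(\omega + i\partial\bar\partial\psi_t)^n = e^{K\psi_t + tF}\omega^n, \qquad t \in [0,1],
\end{equation*}
which admits $\psi_0 \equiv 0$ as a solution at $t = 0$, and whose solution at $t = 1$ is the desired $\psi$. Let $T \subset [0,1]$ denote the set of $t$ for which a solution $\psi_t \in \tilde{C}^{k+\varepsilon}(M)$ satisfying (\ref{E:Quasi isometry}) exists. Uniqueness at each $t$ is a direct consequence of Yau's generalized maximum principle applied to the difference of two putative solutions; the task then is to show that $T$ is non-empty, open, and closed, hence equals $[0,1]$.

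For openness at $t_0 \in T$, I would apply the implicit function theorem to the smooth map
\begin{equation*}
\Phi(\psi, t) := \log\frac{(\omega + i\partial\bar\partial\psi)^n}{\omega^n} - K\psi - tF, \qquad \tilde{C}^{k+\varepsilon}(M) \times [0,1] \to \tilde{C}^{k-2+\varepsilon}(M).
\end{equation*}
The linearization in $\psi$ at $\psi_{t_0}$ is $L(v) = \Delta_{\omega_{\psi_{t_0}}} v - K v$. Because (\ref{E:Quasi isometry}) holds, the metric $\omega_{\psi_{t_0}}$ still has bounded geometry, in particular Ricci bounded below, so Yau's generalized maximum principle shows that $Lv = 0$ forces $v \equiv 0$; surjectivity of $L$ between the bounded-geometry Schauder spaces follows from uniform linear Schauder theory carried out chartwise in the $(U_p, \xi_p)$.

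Closedness reduces to uniform a priori $C^{k+\varepsilon}$-estimates for $\psi_t$. The $C^0$-estimate comes from applying the generalized maximum principle directly to the Monge-Amp\`{e}re equation: at a near-supremum sequence one has $\omega + i\partial\bar\partial\psi_t \le (1 + o(1))\omega$, forcing $e^{K\psi_t + tF} \le 1 + o(1)$, whence $\sup\psi_t \le \|F\|_{\infty}/K$, and the infimum bound is symmetric. For the $C^2$-estimate I would run the Aubin-Yau computation on $e^{-A\psi_t}\,\mathrm{tr}_\omega\omega_{\psi_t}$ for $A$ sufficiently large, again closing the argument via the generalized maximum principle; this recovers (\ref{E:Quasi isometry}) uniformly in $t$. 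Higher-order estimates are then purely local: in each chart $\xi_p$ the equation becomes a uniformly elliptic complex Monge-Amp\`{e}re equation with uniformly bounded coefficients, so Evans-Krylov followed by Schauder bootstrapping delivers uniform $C^{k+\varepsilon}$-bounds, and passing to the limit along a sequence $t_i \to t^*$ in $\overline{T}$ shows $t^* \in T$.

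The main obstacle is the non-compactness of $M$: extrema need not be attained, so every step that would normally invoke the classical maximum principle must be replaced by Yau's generalized version, which in turn requires the iterates to be globally bounded and the ambient geometry to satisfy a uniform Ricci lower bound. This is precisely what Definition \ref{D:bounded geometry} and the Banach space $\tilde{C}^{k+\varepsilon}(M)$ are engineered to supply: bounded geometry is preserved along the continuity path through the quasi-isometry (\ref{E:Quasi isometry}), so the a priori estimates close on themselves. The analyticity claim follows at the end from standard elliptic analytic regularity applied to the now-smooth solution of a real-analytic elliptic equation.
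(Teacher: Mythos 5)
The paper does not prove this statement at all: it is imported verbatim as Theorem 4.4 of Cheng--Yau \cite{Cheng_Yau}, so there is no internal proof to compare against. Your continuity-method outline is, in substance, exactly the original Cheng--Yau argument: the path $e^{K\psi_t+tF}$, uniqueness and the $C^0$-bound $\sup|\psi_t|\le \|F\|_\infty/K$ via Yau's generalized maximum principle, openness by the implicit function theorem between the bounded-geometry H\"older spaces $\tilde C^{k+\varepsilon}$, the Aubin--Yau second-order estimate on $e^{-A\psi_t}\,\mathrm{tr}_\omega\omega_{\psi_t}$, and the observation that \eqref{E:Quasi isometry} preserves bounded geometry along the path so the estimates close up. Two small remarks. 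First, where you write that at a near-supremum sequence $\omega+i\partial\bar\partial\psi_t\le(1+o(1))\omega$: the Laplacian form of the Omori--Yau principle only gives $\Delta\psi_t\le o(1)$, and one should pass to the Monge--Amp\`ere bound via the arithmetic--geometric mean inequality $\prod(1+\lambda_i)\le\paren{1+\Delta\psi_t/n}^n$ (using $\lambda_i>-1$), or else invoke the Hessian form of the principle, which is legitimate here since bounded geometry gives bounded sectional curvature. Second, Cheng--Yau close the higher-order estimates with Calabi's third-order identity rather than Evans--Krylov (which postdates their paper); your substitution is a valid modern shortcut since the equation is concave and uniformly elliptic in each chart $(U_p,\xi_p)$. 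The analyticity claim is, as you say, standard analytic elliptic regularity (Morrey) applied locally once the solution is known to be smooth.
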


\begin{remark}
The equation \eqref{E:Elliptic Monge-Ampere} is called the \emph{elliptic complex \ma equation}.
The inequality \eqref{E:Quasi isometry} implies that $(M,\omega+i\partial\ov\partial \psi)$ also has bounded geometry of order $k$ (see Proposition 1.4 in \cite{Cheng_Yau}).
\end{remark}

We further assume that the K\"{a}hler form $\omega$ satisfies the following condition:
\begin{equation} \label{E:CCohomology} 
\ric(\omega)+K\omega=i\partial\ov\partial F,
\end{equation}
for some constant $K>0$ and function $F\in\tilde{C}^{k-2+\varepsilon}(M)$.
Consider the K\"{a}helr metric $\omega_{KE}:=\omega+i\partial\ov\partial\psi$, where $\psi$ is the solution of the \ma equation \eqref{E:Elliptic Monge-Ampere} in Theorem \ref{T:CY1}.
Then we have the following

\begin{theorem} [Cheng-Yau \cite{Cheng_Yau}] \label{T:CY2}
The K\"{a}hler metric $\omega_{KE}:=\omega+i\partial\ov\partial\psi$ is the unique complete K\"{a}hler-Einstein metric of $M$ satisfying $\ric(\omega_{KE})=-K\omega_{KE}$.
\end{theorem}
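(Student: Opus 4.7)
The plan is to prove the theorem in three steps: verify that $\omega_{KE}$ satisfies the Einstein equation, that it is complete, and that it is unique among complete K\"{a}hler-Einstein metrics on $M$. For the Einstein equation, I would take $-i\partial\ov\partial\log$ of both sides of \eqref{E:Elliptic Monge-Ampere} written locally as $\det(g_{KE,\alpha\bar\beta}) = e^{K\psi+F}\det(g_{\alpha\bar\beta})$, which gives
\begin{equation*}
\ric(\omega_{KE}) = \ric(\omega) - K\,i\partial\ov\partial\psi - i\partial\ov\partial F.
\end{equation*}
Substituting the cohomological hypothesis \eqref{E:CCohomology} collapses this to $\ric(\omega_{KE}) = -K(\omega + i\partial\ov\partial\psi) = -K\omega_{KE}$. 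Completeness then follows at once from the quasi-isometry bound \eqref{E:Quasi isometry}: since $\tfrac{1}{C}\omega \le \omega_{KE} \le C\omega$, the induced distance functions are bi-Lipschitz equivalent, so completeness of $\omega$ transfers to $\omega_{KE}$.

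The uniqueness step is the heart of the proof. Given another complete K\"{a}hler-Einstein metric $\omega'$ on $M$ with $\ric(\omega')=-K\omega'$, the weak maximum principle is unavailable because $M$ is non-compact, so I would rely on Yau's generalized maximum principle, which applies thanks to the Ricci lower bound shared by both metrics. A Schwarz-lemma-type calculation on the trace $u = \mathrm{tr}_{\omega'}\omega_{KE}$ produces a differential inequality roughly of the form $\Delta_{\omega'}\log u \ge K u - C$, from which Yau's principle yields a uniform upper bound on $u$; swapping the roles of $\omega'$ and $\omega_{KE}$ gives the reverse bound, so the two metrics are mutually quasi-isometric. Once that is known, the difference can be written as $\omega' - \omega_{KE} = i\partial\ov\partial\varphi$ for a bounded potential $\varphi$ satisfying $(\omega_{KE} + i\partial\ov\partial\varphi)^n = e^{K\varphi}\omega_{KE}^n$, and a final application of Yau's generalized maximum principle at the generalized supremum and infimum of $\varphi$ forces $\varphi \equiv 0$.

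The main obstacle will be the application of Yau's generalized maximum principle at each stage: it demands uniform control on the Ricci curvature (automatic in the Einstein setting) and that the test functions $\log u$ and $\varphi$ be sufficiently regular and bounded above or below for a generalized extremum to be attained. Verifying that the Schwarz-lemma inequality truly yields the desired trace bound, and that the difference of two complete K\"{a}hler-Einstein metrics on the non-compact manifold $M$ can indeed be represented globally as $i\partial\ov\partial$ of a bounded function, are the two technical subtleties that require the most care.
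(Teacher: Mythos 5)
The paper itself offers no proof of this statement---it is quoted directly from Cheng--Yau---so there is no internal argument to compare against; I can only assess your proposal on its merits. Your first two steps are correct and complete: taking $i\partial\ov\partial\log$ of the local form of \eqref{E:Elliptic Monge-Ampere} and substituting \eqref{E:CCohomology} does give $\ric(\omega_{KE})=-K\omega_{KE}$, and the quasi-isometry \eqref{E:Quasi isometry} does transfer completeness.

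The uniqueness step, however, has a genuine gap as written. The Chern--Lu/Schwarz-lemma inequality for $u=\mathrm{tr}_{\omega'}\omega_{KE}$ of the shape $\Delta_{\omega'}\log u\ge Ku-C$ requires the holomorphic \emph{bisectional} curvature of the target metric $\omega_{KE}$ to be bounded above by a negative constant; this is not a consequence of the Einstein condition $\ric(\omega_{KE})=-K\omega_{KE}$, so Yau's generalized maximum principle cannot be invoked at that stage without further curvature control. Your final step also relies on writing $\omega'-\omega_{KE}=i\partial\ov\partial\varphi$ with $\varphi$ globally defined and bounded, which you flag but do not justify; on a non-compact $M$ there is no automatic global $\partial\ov\partial$-lemma. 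The standard repair, and essentially the argument behind the cited result, is to use Yau's \emph{volume-form} Schwarz lemma, which needs only a Ricci lower bound on the source and a Ricci upper bound $\ric\le -K\omega$ on the target---both automatic here. Applying it in both directions gives $\omega'^n=\omega_{KE}^n$, and then the Einstein equation itself recovers the metric from the volume form: $\omega'=-\tfrac1K\ric(\omega')=\tfrac1K\,i\partial\ov\partial\log\omega'^n=\tfrac1K\,i\partial\ov\partial\log\omega_{KE}^n=\omega_{KE}$. This bypasses both the bisectional-curvature hypothesis and the global potential entirely.
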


\subsection{Parabolic \ma equation and K\"{a}hler-Ricci flow}

There is an alternative proof of Theorem \ref{T:CY2} using Hamilton's Ricci flow due to Chau \cite{Chau}.
This flow is called the \emph{K\"{a}hler-Ricci flow} since it preserves the K\"{a}hler-ness along the flow.

One of the advantages of the K\"{a}hler-Ricci flow approach is that one can prove the existence of K\"{a}hler-Einstein metric under weaker assumptions.
More precisely, Chau proved the following parabolic version of Theorem \ref{T:CY2}.

\begin{theorem} [Chau \cite{Chau}] \label{T:Chau2}
Let $(M,\omega)$ be a complete K\"{a}hler manifold with bounded curvature.
Suppose that there exists a smooth bounded function $F$ satisfying 
\begin{equation*}
\ric(\omega)+K\omega=i\partial\ov\partial F.
\end{equation*}
Then there exist a time family of K\"{a}hler metrics $\omega(t)$ for all $t>0$ satisfying 
\begin{equation} \label{E:Kahler-Ricci flow'}
\begin{aligned}
\frac{\partial}{\partial t}\omega(t)
&=
-\ric(\omega(t))-K\omega(t),\medskip\\
\omega(0)&=\omega.
\end{aligned}
\end{equation}
Moreover, $\omega(t)$ converges to the unique complete \ke metric $\omega_{KE}$.
\end{theorem}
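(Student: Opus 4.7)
The plan is to reduce the tensorial flow \eqref{E:Kahler-Ricci flow'} to a scalar parabolic complex Monge-Amp\`ere equation and then run the Aubin-Yau machinery in Cheng-Yau's bounded-geometry setting. Writing $\omega(t) = \omega + i\partial\ov\partial u(t)$ with $u(0) = 0$, a direct computation using the cohomological assumption $\ric(\omega)+K\omega = i\partial\ov\partial F$ shows that \eqref{E:Kahler-Ricci flow'} is equivalent (up to a spatially constant term that can be absorbed into $u$) to
\begin{equation*}
\frac{\partial u}{\partial t}
    = \log\frac{(\omega + i\partial\ov\partial u)^n}{\omega^n} - Ku - F,
\qquad u(0) = 0.
\end{equation*}
It therefore suffices to produce a smooth solution $u(\cdot,t)$ for all $t \ge 0$, show that it converges to a limit $u_\infty$ as $t\to\infty$, and identify $\omega + i\partial\ov\partial u_\infty$ with the \ke metric $\omega_{KE}$ supplied by Theorem \ref{T:CY2}.

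First I would invoke Shi's short-time existence theorem for the Ricci flow on complete manifolds with bounded curvature to obtain a smooth solution $\omega(t)$ on some maximal interval $[0,T)$ that stays complete with bounded curvature, and translate it back to a $\tilde C^{k+\varepsilon}$-solution $u(t)$ on charts of bounded geometry via parabolic Schauder estimates. The heart of the argument is then a priori control on $u$, $\dot u$, and $\omega(t)/\omega$, uniform in $t \in [0,T)$. A maximum-principle argument applied to $Ae^{-Kt} \pm u$ and to $v := e^{Kt}\dot u$ (which, upon differentiating the scalar equation in $t$, satisfies the linear heat equation $(\partial_t - \Delta_{\omega(t)})v = 0$) yields uniform $C^0$-bounds. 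The $C^2$-estimate, equivalent to a two-sided bound $C^{-1}\omega \le \omega(t) \le C\omega$, then follows from a parabolic Chern-Lu / Yau computation on $\log\mathrm{tr}_\omega\omega(t)$, and higher regularity is standard from Evans-Krylov plus parabolic Schauder on bounded-geometry charts. Since $M$ is non-compact, each maximum principle must be justified through the Omori-Yau maximum principle, which is available by the bounded curvature hypothesis.

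With these estimates in hand, the solution cannot degenerate in finite time, so $T = \infty$. For convergence as $t\to\infty$, the exponential decay $\|\dot u(t)\|_{C^0} \le Ce^{-Kt}$ follows either directly from the bounded heat solution $v$ or from the maximum principle applied to $e^{Kt}\dot u$; this decay makes $u(t)$ Cauchy in $C^0$, and interpolation with the uniform higher-order bounds produces a smooth limit $u_\infty \in \tilde C^{k+\varepsilon}(M)$. Passing to the limit in the evolution equation yields $0 = \log\frac{(\omega + i\partial\ov\partial u_\infty)^n}{\omega^n} - Ku_\infty - F$, which is precisely \eqref{E:Elliptic Monge-Ampere}, so by the uniqueness part of Theorem \ref{T:CY2} we conclude $\omega + i\partial\ov\partial u_\infty = \omega_{KE}$. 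The hardest part throughout is guaranteeing that $\omega(t)$ retains uniform bounded geometry so that the Omori-Yau principle and parabolic Schauder theory remain legitimately available at every time; propagating the curvature bounds forward in $t$ and closing the $C^2$-estimate in the non-compact setting is the delicate step that distinguishes this argument from the compact case handled by Cao.
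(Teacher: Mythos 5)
Your proposal is correct and follows essentially the same route as the paper: the paper likewise reduces the flow to the scalar parabolic Monge--Amp\`ere equation $\partial_t\varphi=\log\frac{(\omega+i\partial\ov\partial\varphi)^n}{\omega^n}-K\varphi-F$, uses Shi's short-time existence to upgrade $\omega$ to a metric of bounded geometry with $F\in\tilde C^{k-2+\varepsilon}(M)$, and then invokes Chau's Theorem \ref{T:Chau} for long-time existence and convergence of $\varphi(t)$ to the Cheng--Yau solution $\psi$, identifying the limit with $\omega_{KE}$ via Theorem \ref{T:CY2}. The only difference is that you unpack the interior of Chau's theorem (the $C^0$ bound via $e^{Kt}\dot u$, the parabolic second-order estimate, and the non-compact maximum principle), which the paper treats as a black box.
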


The equation \eqref{E:Kahler-Ricci flow'} is called the (normalized) {\it K\"ahler-Ricci flow}.
Note that here, we assumed neither the conditions of bounded geometry for $\omega$ nor $F\in\tilde{C}^{k-2+\varepsilon}(M)$.
But one can always find such metrics using the short time existence of K\"ahler-Ricci flow due to Shi \cite{Shi} so that the K\"{a}hler-Einstein metric exists by Theorem \ref{T:CY2}. 

To prove the long time existence of K\"ahler-Ricci flow, Chau considered the functions $\varphi\in C^{\infty}(M\times[0,\infty))$ such that $
\omega(t):=\omega+i\partial\ov\partial\varphi(t)
$ satisfies the equation \eqref{E:Kahler-Ricci flow'}.
Then the problem is reduced to the solvability of the following \emph{parabolic complex \ma equation}.

\begin{theorem} [Theorem 1.1 in \cite{Chau}] \label{T:Chau}
There exists a solution $\varphi\in\tilde{C}^{k+\epsilon,\frac{k+\epsilon}{2}}(M\times[0,\infty))$ of the following equation:
\begin{equation}  \label{E:Parabolic Monge-Ampere'}
\left\{ \begin{array}{l}
\frac{\partial}{\partial t}\varphi
=
\log\frac{(\omega+i\partial\ov\partial\varphi)^n}{\omega^n}-K\varphi-F,\medskip\\
\varphi|_{t=0}=0.
\end{array}\right.
\end{equation}
Moreover, $\varphi(t)$ converges to the function $\psi$ in $\tilde{C}^{k+\varepsilon}(M)$ as $t\rightarrow\infty$, where $\psi$ is the unique solution of the equation \eqref{E:Elliptic Monge-Ampere} in Theorem \ref{T:CY1}. 
\end{theorem}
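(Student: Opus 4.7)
The plan is to reduce the flow to a scalar parabolic \ma equation, prove long-time existence via uniform a priori estimates that propagate the bounded-geometry property, and extract exponential-in-time convergence to the Cheng-Yau solution $\psi$ by analysing the time derivative of the flow.

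First, writing $\omega(t)=\omega+\ddc\varphi(t)$ and using the hypothesis $\ric(\omega)+K\omega=\ddc F$, the flow equation $\frac{\partial}{\partial t}\omega(t)=-\ric(\omega(t))-K\omega(t)$ becomes an identity of $\ddc$-exact $(1,1)$-forms; integrating (and absorbing the time-dependent constant of integration into $\varphi$) reduces it to precisely the scalar equation \eqref{E:Parabolic Monge-Ampere'}. Short-time existence with the stated parabolic H\"older regularity then follows from Shi's short-time theorem for the K\"ahler-Ricci flow on complete manifolds with bounded curvature, together with standard parabolic Schauder theory applied chart-wise under the bounded-geometry assumption.

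For long-time existence I would establish uniform-in-time a priori estimates. A $C^{0}$ bound on $\varphi$ comes from an Omori-Yau-type noncompact maximum principle applied at an attempted extremum of $\varphi$: the boundedness of $F$ together with $K>0$ forces $|\varphi|\le\sup|F|/K$. A Laplacian bound is obtained by running Yau's second-order computation for $\log \mathrm{tr}_\omega(\omega+\ddc\varphi)-A\varphi$ with $A$ large enough; the bounded curvature of $\omega$ controls the Bochner error terms, and the noncompact maximum principle yields a two-sided quasi-isometry $C^{-1}\omega\le\omega(t)\le C\omega$. Higher-order regularity comes from the Evans-Krylov theorem in the bounded-geometry charts, followed by a parabolic Schauder bootstrap to $\tilde C^{k+\epsilon,(k+\epsilon)/2}$. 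These uniform estimates preserve the bounded geometry of $\omega(t)$, so the solution extends to all $t\ge 0$.

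For the convergence statement, set $\phi(t):=\varphi(t)-\psi$. Using that $\psi$ solves $(\omega+\ddc\psi)^{n}=e^{K\psi+F}\omega^{n}$, one checks that $\phi$ satisfies
\begin{equation*}
\frac{\partial\phi}{\partial t}=\log\frac{(\omega_{KE}+\ddc\phi)^{n}}{\omega_{KE}^{n}}-K\phi,\qquad \phi(0)=-\psi,
\end{equation*}
where $\omega_{KE}=\omega+\ddc\psi$. Differentiating in $t$, the function $u:=\frac{\partial\phi}{\partial t}$ satisfies the linear heat-type equation $\frac{\partial u}{\partial t}=\Delta_{g(t)}u-Ku$ with respect to the evolving metric $g(t)$ associated to $\omega_{KE}+\ddc\phi$. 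The noncompact maximum principle (applicable thanks to the uniform $C^{0}$ and $C^{2}$ bounds above) yields $|u(t)|\le Ce^{-Kt}$; integrating in time gives exponential $C^{0}$ decay of $\phi$ to $0$, and interpolation against the uniform $\tilde C^{k+\epsilon,(k+\epsilon)/2}$ bound upgrades this to convergence $\varphi(t)\to\psi$ in $\tilde C^{k+\epsilon}(M)$.

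The main obstacle throughout is the noncompactness: the standard weak maximum principle fails on a complete noncompact manifold, so each invocation (for the $C^{0}$ bound, the trace bound, and the exponential decay of $\partial_{t}\phi$) must be justified by an Omori-Yau or Ni-type maximum principle, whose applicability in turn depends on a priori growth control at infinity. Interwoven with this is the necessity of maintaining the bounded-geometry property of $\omega(t)$ uniformly in $t$, so that the same family of holomorphic coordinate charts supports the Schauder bootstrap and the Evans-Krylov estimate throughout the flow.
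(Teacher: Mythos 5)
The paper does not prove this statement: Theorem \ref{T:Chau} is imported verbatim from Chau's work (Theorem 1.1 in \cite{Chau}) and is used as a black box, so there is no internal proof to compare against. Judged on its own merits, your sketch follows the standard strategy --- and indeed the strategy of Chau's actual proof: reduction to the scalar parabolic \ma equation, short-time existence from Shi plus chart-wise Schauder theory, uniform $C^0$ and Laplacian estimates via Omori--Yau/Ni-type noncompact maximum principles, Evans--Krylov and bootstrap in the bounded-geometry charts, and exponential decay of $\dot\varphi$ from the linearized heat equation $\partial_t\dot\varphi=\Delta_t\dot\varphi-K\dot\varphi$. Two small points deserve attention. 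First, the exponential bound $|\partial_t\phi|\le Ce^{-Kt}$ only shows that $\varphi(t)$ is Cauchy in $C^0$ and converges to \emph{some} limit $\varphi_\infty$; identifying $\varphi_\infty$ with $\psi$ requires passing to the limit in the flow equation (using $\partial_t\varphi\to0$) to see that $\varphi_\infty$ solves the elliptic equation \eqref{E:Elliptic Monge-Ampere}, and then invoking the uniqueness in Theorem \ref{T:CY1}; as written, setting $\phi=\varphi-\psi$ at the outset quietly presupposes the answer. Second, the lower bound in the quasi-isometry $C^{-1}\omega\le\omega(t)\le C\omega$ does not follow from the trace estimate alone: one needs the two-sided bound on the volume ratio coming from $\log\bigl((\omega+\ddc\varphi)^n/\omega^n\bigr)=\partial_t\varphi+K\varphi+F$ together with the decay of $\partial_t\varphi$, combined with the upper trace bound. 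Both are routine and present in \cite{Chau}, so your outline is an accurate reconstruction rather than a gap in substance.
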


Here, the space $\tilde{C}^{k+\epsilon,\frac{k+\epsilon}{2}}(M\times[0,T))$ is the Banach completion of the parabolic H\"{o}lder space $\{u\in C^{\infty}(M\times[0,T)): \norm{u}_{k+\epsilon,\frac{k+\epsilon}{2}}<\infty\}$
with the norm
\begin{equation*}
\norm{u}_{k+\epsilon,\frac{k+\epsilon}{2}}
:=
\sup_{p\in M}
\left
	\{\norm{u\circ\xi_p^{-1}}_{C^{k+\epsilon,\frac{k+\epsilon}{2}}(V_p\times[0,T))}
\right\},
\end{equation*}
where $\norm{\cdot}_{C^{k+\epsilon,\frac{k+\epsilon}{2}}(V_p\times[0,T))}$ is the standard parabolic H\"{o}lder norm on $V_p\times[0,T)$.

%===============================================================================
\section{Fiberwise K\"{a}hler-Ricci flow}

In this section, we discuss the variation of the K\"ahler-Ricci flows on a holomorphic family of bounded strongly pseudoconex domains, which gives the fiberwise K\"ahler-Ricci flow.
Moreover, we will prove that the fiberwise K\"ahler-Ricci flow converges the fiberwise K\"ahler-Einstein metric.

\subsection{Construction of the reference form}\label{S:initial}

First recall the setting in Introduction :
Let $D$ be a domain in $\CC^{n+1}$ and $S:=\pi(D)\subset\CC$. 
Suppose that $D$ is a \emph{holomorphic family of bounded strongly pseudoconvex domains}, i.e., it satisfies the following:
\begin{itemize}

\item[(\romannumeral1)] $\pi^{-1}(S)\cap\partial D$ is smooth and $\pi\vert_{\partial D}:\pi^{-1}(S)\cap\partial D\rightarrow S$ is a submersion. 

\item[(\romannumeral2)] For $y\in S$, all fibers $D_y:=\pi^{-1}(y)\cap D$ are smoothly bounded strongly pseudoconvex domains in $\CC^{n}$.

\end{itemize}

Note that the Condition (\romannumeral1) implies that all fibers are diffeomorphic by Ehresmann's fibration theorem (cf. \cite{Saeki}).
Together with the Condition (\romannumeral2), there exists a defining function $r$ of $D$ such that $r\vert_{\ov D_y}$ is a strictly plurisubharmonic function on $\ov D_y$.
Define a $d$-closed smooth $(1,1)$-form on the total space $D$ by
$$
\omega:=i\partial\bar\partial (-\log(-r)),
$$
where $\partial$ and $\bar\partial$ are the operators of the total space $\CC^{n+1}$.
Then one can check that $(D_y,\omega_y)$ is a complete K\"{a}hler manifold with bounded geometry of infinite order (for the details, see \cite{Cheng_Yau}).
However, there is no information about the positivity of the reference form $\omega$ along the base direction.
The following theorem says that positivity of $\omega$ on $D$ is guaranteed by the pseudoconvexity of $D$ in $\CC^{n+1}$. 

\begin{proposition} \label{P:Pseudoconvex}
If $D$ is pseudoconvex on $\CC^{n+1}$, then there exists a defining function $r$ of $D$ such that $\omega:=i\partial\bar\partial (-\log(-r))$ satisfies the following conditions
\begin{itemize}
\item $\omega_y:=\omega\vert_{D_y}$ is complete K\"{a}hler form on each fiber $D_y$.
\item $\omega\geq0$ on $D$, and $\omega$ is strictly positive at least one point on each fiber $D_y$.
\end{itemize}

\end{proposition}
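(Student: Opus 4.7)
The plan is to invoke the Diederich--Fornaess theorem on the bounded pseudoconvex domain $D\subset\mathbb{C}^{n+1}$: there exist a smooth defining function $r$ of $D$ and an exponent $\eta\in(0,1)$ such that $-(-r)^\eta$ is strictly plurisubharmonic on $D$. I would then show that this single $r$ meets both requirements of the proposition, in fact yielding $\omega:=i\partial\bar\partial(-\log(-r))$ strictly positive on the whole of $D$ (which is stronger than the semi-positivity and one-point strict positivity asked for).

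The key algebraic fact is that strict plurisubharmonicity of $-(-r)^\eta$ implies the same for $-\log(-r)$. A direct computation gives
\[
i\partial\bar\partial\bigl(-(-r)^\eta\bigr) = \eta(-r)^{\eta-2}\bigl[(1-\eta)\,i\partial r\wedge\bar\partial r + (-r)\,i\partial\bar\partial r\bigr], \qquad \omega = \frac{1}{(-r)^2}\bigl[i\partial r\wedge\bar\partial r + (-r)\,i\partial\bar\partial r\bigr].
\]
Since $i\partial r\wedge\bar\partial r\geq 0$ is a semi-positive rank-one form and the bracket in the first expression is strictly positive by hypothesis, the bracket in the second expression exceeds it by $\eta\,i\partial r\wedge\bar\partial r\geq 0$ and is also strictly positive, giving $\omega>0$ throughout $D$. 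For the complete K\"ahler property on fibers, $r|_{\ov{D}_y}$ is a smooth defining function of the strongly pseudoconvex domain $D_y$; by first replacing $r$ with $r\,e^{Kr}$ for $K$ sufficiently large --- a standard trick that preserves the Diederich--Fornaess inequality after shrinking $\eta$ --- one may arrange that $r|_{\ov{D}_y}$ is strictly plurisubharmonic near $\partial D_y$ for every $y\in S$, so the Cheng--Yau construction recalled in Section \ref{S:initial} realizes $\omega|_{D_y} = i\partial\bar\partial(-\log(-r|_{D_y}))$ as a complete K\"ahler metric of bounded geometry.

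The main technical obstacle is coordinating the two modifications so that a single defining function $r$ simultaneously satisfies the Diederich--Fornaess condition globally on $D$ and the fiberwise strict plurisubharmonicity near each $\partial D_y$. Since both modifications are of exponential type and largely commute, I expect that applying $r\mapsto re^{Kr}$ first and then invoking Diederich--Fornaess to the result will suffice, but one must track carefully how the choice of $K$ interacts with the exponent $\eta$ produced by Diederich--Fornaess; care is also needed if $D$ is unbounded, in which case one would work over relatively compact exhaustions of the base $S$ and pass to a limit.
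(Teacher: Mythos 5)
Your route is genuinely different from the paper's, and its central algebraic step is correct: if $-(-r)^\eta$ is strictly plurisubharmonic for some $\eta\in(0,1)$, then writing $i\partial\bar\partial(-\log(-r))=(-r)^{-2}\left[i\partial r\wedge\bar\partial r+(-r)\,i\partial\bar\partial r\right]$ and comparing brackets as you do shows that $\omega>0$ wherever the Diederich--Fornaess inequality holds. The paper instead starts from a plurisubharmonic defining function $\tilde r$ of $D$ that is fiberwise strictly plurisubharmonic near the boundary, reparametrizes it by a convex increasing function $\chi$ and adds a term $\lambda|z|^2$ to restore fiberwise strict plurisubharmonicity in the interior; this yields only $\omega\ge0$ on $D$ together with fiberwise positivity, which is all that Theorem \ref{T:Main} requires.

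The genuine gap is the applicability of Diederich--Fornaess. That theorem needs a bounded pseudoconvex domain whose entire boundary is $C^2$ (or at least Lipschitz), whereas here $D$ need not be bounded and only $\pi^{-1}(S)\cap\partial D$ is assumed smooth: nothing is assumed about $\ov{D}$ over $\partial S$, where the fibers may degenerate. Your proposed repair --- exhaust $S$, apply Diederich--Fornaess on each truncation, and pass to a limit --- is not yet an argument: the truncated domains $D\cap\pi^{-1}(S')$ are not smooth over $\partial S'$, the exponents $\eta$ produced along an exhaustion may tend to $0$, and the resulting defining functions need not converge to a defining function of $D$. (The paper's own proof also rests on an existence assertion, namely a global plurisubharmonic defining function for this special class of fibrations, but it does not route this through Diederich--Fornaess.) Separately, the paragraph coordinating $r\mapsto re^{Kr}$ with Diederich--Fornaess is both confused and unnecessary: Diederich--Fornaess produces its own defining function rather than certifying a prescribed one, and once $\omega>0$ on $D$ the restrictions $\omega|_{D_y}$ are already K\"ahler, with completeness coming from the term $(-r)^{-2}\,i\partial r\wedge\bar\partial r$ and the fact that $\partial(r|_{D_y})\neq 0$ near $\partial D_y$; no second modification of $r$ is needed for the statement of the proposition.
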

\begin{proof}
Note that $D$ is a holomorphic family of bounded strongly pseudoconvex domains, which is pseudoconvex in $\CC^{n+1}$.
Then there exists a smooth plurisubharmonic defining function $\tilde{r}$ of $D$ such that $\tilde{r}\vert_{\ov D_y}$ is a strictly plurisubharmonic function on $\ov D_y\cap U$, where $U$ is a neighborhood of $\pi^{-1}(S)\cap\partial D$.
Let $\epsilon_1, \epsilon_2$ be negative constants satisfying $\{x\in U: \epsilon_1<\hat{r}(x)<\epsilon_2<0\}\subset\subset U\cap D$.
Choose $\chi\in C^{\infty}({\mathbb{R}})$ such that $\chi$ is negative constant for $t\leq\epsilon_1$, $\chi(t)=t$ for $t\geq\epsilon_2$, and $\chi',\chi''>0$ for $\epsilon_1<t<\epsilon_2$. 
Then for a suitable cutoff function $\lambda$, $r:=\chi\circ\tilde{r}+\lambda|z|^2$ is a smooth defining function of $D$ satisfying the conditions in the statement of the proposition.
\end{proof}

Since $\omega_y>0$ for each fiber, the {\it relative curvature form} of $\omega$ can be defined by
$$
\Theta_\omega:=i\partial\bar\partial\log(\omega^n\wedge dV_s),
$$
where $dV_s:=ids\wedge d\ov s$ is the volume form on the base space $\CC$ (cf. \cite{Choi_ych}).
In fact, this is the curvature form of the relative canonical line bundle.
The following proposition will be used later to prove Theorem \ref{T:Relative Kahler-Ricci flow} and Theorem \ref{berman_flow}.

\begin{proposition}\label{P:initial_coho}
There exists a bounded smooth function $F$ on $D$ satisfying
\begin{equation}\label{E:Cohomology}
-\Theta_\omega+(n+1)\omega=i\partial\bar\partial F.
\end{equation}
Moreover, $F$ is smoothly extended up to $\partial D$.
\end{proposition}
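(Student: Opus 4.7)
The strategy is to write $\omega^n\wedge dV_s$ as an explicit density against the Euclidean volume on $\CC^{n+1}$, isolate the $(-r)^{-(n+1)}$ factor that accounts for the boundary blow-up, and set $F$ to be minus the logarithm of the resulting smooth remainder. The initial observation is that since $dV_s = ids\wedge d\bar s$ already supplies both $ds$ and $d\bar s$, only the fiber-direction block $\omega_{\alpha\bar\beta}=\partial_\alpha\partial_{\bar\beta}(-\log(-r))$ of $\omega$ contributes to the wedge $\omega^n\wedge dV_s$, giving
$$\omega^n\wedge dV_s = n!\,\det(\omega_{\alpha\bar\beta})\,dV_{\CC^{n+1}},\qquad \omega_{\alpha\bar\beta} = \frac{(-r)r_{\alpha\bar\beta}+r_\alpha r_{\bar\beta}}{(-r)^2}.$$

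Next, I would apply the matrix determinant lemma to the rank-one update $r_\alpha r_{\bar\beta}$ of $(-r)r_{\alpha\bar\beta}$, obtaining
$$\det\bigl[(-r)r_{\alpha\bar\beta}+r_\alpha r_{\bar\beta}\bigr] = (-r)^{n-1}\det(r_{\alpha\bar\beta})\bigl[(-r)+r^{\bar\beta\alpha}r_\alpha r_{\bar\beta}\bigr],$$
where $(r^{\bar\beta\alpha})$ is the inverse of the fiber Hessian $(r_{\alpha\bar\beta})$. Dividing by $(-r)^{2n}$ and taking logarithms yields
$$\log(\omega^n\wedge dV_s) = -(n+1)\log(-r) + H + \mathrm{const},\qquad H := \log\det(r_{\alpha\bar\beta})+\log\bigl[(-r)+r^{\bar\beta\alpha}r_\alpha r_{\bar\beta}\bigr].$$
Applying $\ddc$ and using $\omega = \ddc(-\log(-r))$, the singular term produces precisely $(n+1)\omega$, so $\Theta_\omega = (n+1)\omega + \ddc H$. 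Setting $F := -H$ then gives \eqref{E:Cohomology}.

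The remaining task, and the main technical point, is to show that $H$ extends smoothly and boundedly up to $\partial D$. The factor $\det(r_{\alpha\bar\beta})$ is smooth and strictly positive on $\ov D$ by strict plurisubharmonicity of $r\vert_{\ov{D_y}}$ on every fiber. The subtler factor is the bracket $(-r)+r^{\bar\beta\alpha}r_\alpha r_{\bar\beta}$: in the interior both summands are nonnegative and $-r>0$, while on $\partial D$ one has $-r=0$, so positivity of the bracket reduces to non-vanishing of the fiberwise gradient $(r_\alpha)$ of $r$ there. This is precisely what condition (ii) of the family guarantees: each $D_y$ is smoothly bounded, so $d(r\vert_{D_y})\neq 0$ on $\partial D_y$. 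Combined with uniform positive-definiteness of $(r_{\alpha\bar\beta})$, the bracket is thus bounded away from $0$ near $\partial D$; away from $\partial D$, the first summand $-r$ is bounded away from $0$ on compact subsets. Gluing these two bounds yields a uniform positive lower bound on the bracket, making its logarithm --- and hence $H$ and $F=-H$ --- smooth and bounded up to $\partial D$.
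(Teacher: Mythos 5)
Your proof is correct and takes essentially the same route as the paper: both reduce the statement to the identity $\det(g_{\alpha\bar\beta})=(-r)^{-(n+1)}\det(r_{\alpha\bar\beta})\bigl(-r+\abs{\partial r}^2\bigr)$ for $g=-\log(-r)$ and then set $F=-\log\bigl(\det(r_{\alpha\bar\beta})(-r+\abs{\partial r}^2)\bigr)$, with boundedness following from $\abs{\partial r}\neq 0$ on $\partial D$. The only difference is cosmetic --- the paper cites Cheng--Yau for the determinant identity while you derive it via the matrix determinant lemma and spell out the boundary nondegeneracy in more detail.
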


\begin{proof}
Let $(z^1,\ldots,z^n,s)$ be the Euclidean coordinate for $\CC^{n+1}$.
Then we have
$$
\Theta_\omega=\ddc\log\det(g_{\alpha\bar\beta}),
$$
where $g:=-\log(-r)$ is a function on $D$.
The computations in \cite{Cheng_Yau} shows that
\begin{equation*}
\det(g_{\alpha\bar\beta})
=
\paren{\frac{1}{-r}}^{n+1}\det(r_{\alpha\bar\beta})\paren{-r+\abs{\partial r}^2},
\end{equation*}
where $\abs{\partial r}^2:=r^{\alpha\bar\beta}r_\alpha r_{\bar\beta}$ with $r^{\alpha\bar\beta}=(r_{\alpha\bar\beta})^{-1}$.
It follows that
\begin{align*}
\Theta_\omega
=
\ddc\log\det(g_{\alpha\bar\beta})
=
(n+1)\omega
+
\ddc\log\paren{\det(r_{\alpha\bar\beta})\paren{-r+\abs{\partial r}^2}}.
\end{align*}
If we define the function $F:D\rightarrow\RR$ by
\begin{equation*}
F:=-\log\paren{\det(r_{\alpha\bar\beta})\paren{-r+\abs{\partial r}^2}},
\end{equation*}
then $F$ is a bounded smooth function satisfying the equation \eqref{E:Cohomology}.
Since $r$ is smooth on $\ov D$ and $\abs{\partial r}\neq0$ on $\partial D$, the second assertion follows.
\end{proof}

\subsection{Fiberwise K\"{a}hler-Ricci flow}
Note that $\Theta_\omega|_{D_y}=-\ric(\omega_y)$.
Restricting the equation \eqref{E:Cohomology} to the fiber $D_y$, we have
$$
\ric(\omega_y)+(n+1)\omega_y=i\partial\bar\partial F_y,
$$
where $F_y:=F|_{D_y}$.
Therefore, Theorem \ref{T:Chau} implies that for $y\in S$, there exists a solution $\varphi_y$ on $D_y\times[0,\infty)$ of the parabolic \ma equation:
\begin{equation}  \label{E:Fiberwise Parabolic Monge-Ampere}
\begin{aligned}
\frac{\partial}{\partial t}\varphi_y
&=
\log\frac{(\omega_y+i\partial\bar\partial\varphi_y)^n}{\omega_y^n}-(n+1)\varphi_y-F_y,
\\
\varphi_y|_{t=0}
&=0
\end{aligned}
\end{equation}
Hence $\omega_y(t):=\omega_y+i\partial\bar\partial\varphi_y(t)$ 
is the solution of the (normalized) {\it K\"{a}hler-Ricci flow}:
\begin{equation} \label{E:Fiberwise Kahler-Ricci flow}
\begin{aligned}
\pd{}{t}\omega_y(t)
&
=-\ric(\omega_y(t))-(n+1)\omega_y(t),
\\
\omega_y(0)
&=
\omega_y.
\end{aligned}
\end{equation}
The following proposition yields that the solution $\varphi_y(t)$ of the equations \eqref{E:Fiberwise Parabolic Monge-Ampere} vary smoothly along the base direction $s$.
\begin{proposition}\label{P:Regularity}
For $t\in[0,\infty)$, the function $\varphi(t)$ given by 
$$
\varphi(x;t):=\varphi_y(x;t)
$$
where $y=\pi(x)$ and $x\in D$, is smooth on the total space $D$.
\end{proposition}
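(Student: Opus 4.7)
The plan is to establish smoothness via a parameter-dependent implicit function theorem applied to the parabolic Monge-Amp\`ere operator, after locally trivializing the fibration. First, I would invoke Ehresmann's fibration theorem (already cited in Section \ref{S:initial}) to choose, near any $s_0 \in S$, a neighborhood $V \subset S$ and a smooth diffeomorphism $\Phi : D_{s_0} \times V \to \pi^{-1}(V) \cap D$ with $\pi \circ \Phi$ equal to the projection onto $V$ and $\Phi(\cdot, s_0) = \mathrm{id}$. Pulling back by $\Phi_s := \Phi(\cdot, s)$ yields a smooth family of K\"ahler forms $\tilde{\omega}_s := \Phi_s^\ast (\omega|_{D_s})$ and functions $\tilde{F}_s := \Phi_s^\ast (F|_{D_s})$ on the \emph{fixed} manifold $D_{s_0}$, depending smoothly on the parameter $s \in V$.

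Next I would introduce the parameter-dependent nonlinear operator
\begin{equation*}
T(\varphi, s) := \frac{\partial \varphi}{\partial t} - \log \frac{(\tilde{\omega}_s + i\partial\bar\partial \varphi)^n}{\tilde{\omega}_s^n} + (n+1)\varphi + \tilde{F}_s,
\end{equation*}
viewed as a map between the parabolic H\"older spaces $\tilde{C}^{k+\epsilon, (k+\epsilon)/2}(D_{s_0} \times [0,T])$ defined using bounded-geometry charts of $(D_{s_0}, \tilde{\omega}_{s_0})$, for any fixed $T < \infty$. Applying Theorem \ref{T:Chau} fiberwise furnishes the solution $\tilde{\varphi}_s := \Phi_s^\ast \varphi_s$ of $T(\tilde{\varphi}_s, s) = 0$ with $\tilde{\varphi}_s|_{t=0} = 0$, and the quasi-isometry bound \eqref{E:Quasi isometry} combined with Chau's higher-order estimates gives uniform bounded geometry of the evolving metric $\omega_s(t) := \tilde{\omega}_s + i\partial\bar\partial \tilde{\varphi}_s(t)$ on $D_{s_0}$, uniformly in $(s,t) \in V \times [0,T]$.

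The linearization of $T$ in the $\varphi$-slot at $\tilde{\varphi}_s$ is the linear parabolic operator
\begin{equation*}
L \psi \; = \; \frac{\partial \psi}{\partial t} - \Delta_{\omega_s(t)} \psi + (n+1)\psi.
\end{equation*}
By standard parabolic Schauder theory on complete K\"ahler manifolds with bounded geometry, $L$ is an isomorphism from $\{\psi \in \tilde{C}^{k+\epsilon, (k+\epsilon)/2}(D_{s_0} \times [0,T]) : \psi|_{t=0} = 0\}$ onto $\tilde{C}^{k-2+\epsilon, (k-2+\epsilon)/2}(D_{s_0} \times [0,T])$. The Banach-space implicit function theorem then yields that $s \mapsto \tilde{\varphi}_s$ is a smooth map into the parabolic H\"older space, hence $\tilde{\varphi}(x, s; t)$ is jointly smooth in $(x, s)$ on $D_{s_0} \times V$. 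Transferring back by $\Phi$ shows that $\varphi(x;t) = \tilde{\varphi}(\Phi^{-1}(x); t)$ is smooth on $\pi^{-1}(V) \cap D$ for every fixed $t$; since this holds in a neighborhood of every $s_0$ and for every finite $T$, the conclusion follows on all of $D$.

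The main technical obstacle is verifying the isomorphism property of $L$ on the non-compact fibers with constants uniform in the parameter $s$. This reduces to parabolic Schauder estimates on manifolds with bounded geometry, which rest on the fact that $\omega_s(t)$ inherits bounded geometry of each order from $\tilde{\omega}_s$ through \eqref{E:Quasi isometry} and the curvature estimates underlying Chau's theorem, so that $L$ is uniformly parabolic in the bounded-geometry charts across $V \times [0,T]$; once this uniform invertibility is in hand, the implicit function theorem argument is essentially formal.
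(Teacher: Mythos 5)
Your proposal is correct and follows essentially the same route as the paper: trivialize the fibration via Ehresmann's theorem, set up the parabolic Monge--Amp\`ere operator as a map between parabolic H\"older spaces built from bounded-geometry charts, verify that its linearization $\frac{\partial}{\partial t}-\Delta_t+(n+1)$ is a Banach-space isomorphism, and conclude by the implicit function theorem together with uniqueness of the fiberwise solutions. The only cosmetic difference is that you justify the invertibility of the linearized operator via parabolic Schauder theory on bounded geometry, whereas the paper cites a maximum-principle argument from Chau--Tam; both are standard and the argument is otherwise identical.
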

\begin{proof}
For a fixed point $y_0\in S$, denote by $\Omega:=D_{y_0}$. 
Ehressmann's fibration theorem implies that there exists a fiber-preserving diffeomorphism $\Phi:D\rightarrow\Omega\times S$ which is smoothly extended up to the boundary $\pi^{-1}(S)\cap\partial{D}$. 
Hence for $y\in S$, all Banach spaces $\tilde{C}^{k+\epsilon,\frac{k+\epsilon}{2}}(D_y\times[0,T))$ can be identified with the space $\tilde{C}^{k+\epsilon,\frac{k+\epsilon}{2}}(\Omega\times[0,T))$.

Now we define the following {\it parabolic \ma operator}
$$
\mathcal{M}:U\times\tilde{C}^{k+\epsilon,\frac{k+\epsilon}{2}}(\Omega\times[0,T))\rightarrow\tilde{C}^{k-2+\epsilon,\frac{k-2+\epsilon}{2}}(\Omega\times[0,T))
$$
by
$$
\mathcal{M}(y,\phi)=\frac{\partial}{\partial t}\phi-\log\frac{(\omega_y+i\partial\bar\partial\phi_y)^n}{\omega_y^n}+(n+1)\phi+F_y.
$$
By Theorem \ref{T:Chau}, there exists $\varphi_{y_0}\in\tilde{C}^{k+\epsilon,\frac{k+\epsilon}{2}}(\Omega\times[0,T))$ such that
$$
\mathcal{M}(y_0,\varphi_{y_0})=0.
$$ 
Then, the partial Fr\'{e}chet derivative of $\mathcal{M}$ at the point $(y_0,\varphi_{y_0})$ is an operator
$$
D_2\mathcal{M}(y_0,\varphi_{y_0}):\tilde{C}^{k+\epsilon,\frac{k+\epsilon}{2}}(\Omega\times[0,T))\rightarrow\tilde{C}^{k-2+\epsilon,\frac{k-2+\epsilon}{2}}(\Omega\times[0,T))
$$
which is defined by for any $\phi\in\tilde{C}^{k+\epsilon,\frac{k+\epsilon}{2}}(\Omega\times[0,T))$,
$$
D_2\mathcal{M}(y_0,\varphi_{y_0})(\phi)=\big(\frac{\partial}{\partial t}-\Delta_t+(n+1)\cdot id\big)\phi,
$$
where $\Delta_t$ is the Laplacian with respect to $\omega_{y_0}(t)=\omega_{y_0}+i\partial\bar\partial\varphi_{y_0}(t)$.

Using a version of maximum principle, we can show that $D_2\mathcal{M}(y_0,\varphi_{y_0})$ is a Banach space isomorphism between $\tilde{C}^{k+\epsilon,\frac{k+\epsilon}{2}}(\Omega\times[0,T))$ and $\tilde{C}^{k-2+\epsilon,\frac{k-2+\epsilon}{2}}(\Omega\times[0,T))$ (for details, see the proof of Claim 1 of Lemma 2.2 in \cite{Chau_Tam}). 
Hence the Implicit Function Theorem implies that there exists a Fr\'{e}chet differentiable function $\mu:U\rightarrow\tilde{C}^{k+\epsilon,\frac{k+\epsilon}{2}}(\Omega\times[0,T))$ such that
$$
\mathcal{M}(y,\mu(y))=0.
$$
The uniqueness of the solution implies $\mu(y)=\varphi_y$ so that $\varphi_y\in\tilde{C}^{k+\epsilon,\frac{k+\epsilon}{2}}(\Omega\times[0,T))$.
Since the Monge-Amp\`{e}re operator $\mathcal{M}$ is smooth, the implicit function theorem implies that $\varphi_y$ vary smoothly along the base direction $s$. 
\end{proof}

\begin{remark}\label{R:Boundedness of u}
%We denote the derivatives of $\varphi$ with respect to the base direction $s$ and $\bar s$ by
%$$
%\varphi_s:=\frac{\partial}{\partial s}\varphi,\ 
%\varphi_{\ov s}:=\frac{\partial}{\partial \ov s}\varphi,\
%\varphi_{s\ov s}:=\frac{\partial^2}{\partial s\partial\ov s}\varphi
%\quad\text{and so forth.}
%$$
The proof of Proposition \ref{P:Regularity} implies that for any $l_1,l_2=0,1,2,\dots$,
\begin{equation*}
\paren{\frac{\partial}{\partial s}}^{l_1}\paren{\frac{\partial}{\partial\ov s}}^{l_2}\varphi\Big|_{D_y}\in\tilde C^{k+\epsilon,\frac{k+\epsilon}{2}}(D_y\times[0,\infty)).
\end{equation*}

\end{remark}

By Proposition \ref{P:Regularity}, we can define $d$-closed smooth real $(1,1)$-forms $\omega(t)$ on the total space $D$ by
\begin{equation} \label{E:defomegat}
\omega(t):=\omega+i\partial\bar\partial \varphi(t).
\end{equation}
Since $\omega_y(t):=\omega(t)|_{D_y}$ is K\"{a}hler on each fiber $D_y$,
one can consider relative curvature form $\Theta_{\omega(t)}$ of $\omega(t)$ on $D$, given by
$$ 
\Theta_{\omega(t)}=i\partial\bar\partial\log(\omega(t)^n\wedge dV_s).
$$

\begin{theorem}\label{T:Relative Kahler-Ricci flow}
The form $\omega(t)$ on $D$ satisfies the following equation:
\begin{equation} \label{E:Relative Kahler-Ricci flow}
\begin{aligned}
\frac{\partial}{\partial t}\omega(t)
&=
\Theta_{\omega(t)}-(n+1)\omega(t),\medskip\\
\omega(0)
&=
\omega.
\end{aligned}
\end{equation}
\end{theorem}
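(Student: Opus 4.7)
The plan is to take the total-space operator $\ddc$ of the fiberwise parabolic Monge-Amp\`ere equation \eqref{E:Fiberwise Parabolic Monge-Ampere} and then invoke Proposition \ref{P:initial_coho} to match the right-hand side of \eqref{E:Relative Kahler-Ricci flow}.

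First I would promote the fiberwise equation to a pointwise equation on the whole total space $D$. In local coordinates $(z^1,\ldots,z^n,s)$ the ratio $(\omega_y+i\partial\ov\partial\varphi_y)^n/\omega_y^n$ equals $\det(g_{\alpha\bar\beta}(t))/\det(g_{\alpha\bar\beta}(0))$, which is exactly the smooth positive function $(\omega(t)^n\wedge dV_s)/(\omega^n\wedge dV_s)$ on $D$. Since $\varphi$ is smooth on $D$ by Proposition \ref{P:Regularity}, the identity
\[
\pd{\varphi}{t} \;=\; \log\frac{\omega(t)^n\wedge dV_s}{\omega^n\wedge dV_s} - (n+1)\varphi - F
\]
holds pointwise on the whole total space.

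Next I would apply $\ddc$ (with $\partial,\ov\partial$ the operators of the total space $\CC^{n+1}$) to both sides. Using $\omega(t)-\omega=\ddc\varphi$ from \eqref{E:defomegat} and commuting $\partial/\partial t$ with $\ddc$, the left-hand side becomes $\pd{\omega(t)}{t}$. On the right-hand side, by the very definition of the relative curvature form we obtain $\Theta_{\omega(t)}-\Theta_\omega$ from the two logarithmic terms, together with $-(n+1)(\omega(t)-\omega)-\ddc F$ from the remainder. Proposition \ref{P:initial_coho} supplies $\ddc F = -\Theta_\omega + (n+1)\omega$, so after cancellation one is left with
\[
\pd{\omega(t)}{t} \;=\; \Theta_{\omega(t)}-(n+1)\omega(t),
\]
and the initial condition $\omega(0)=\omega$ is immediate from $\varphi(0)=0$.

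There is essentially no serious obstacle. The only point that deserves a moment's care is verifying that the fiberwise ratio of volume forms really assembles into a globally defined smooth function on $D$, so that it is legitimate to apply the total-space $\ddc$ term by term; once that is clear, the computation is mechanical and Proposition \ref{P:initial_coho} is used precisely to absorb the $\ddc F$ contribution and recover the clean form \eqref{E:Relative Kahler-Ricci flow}.
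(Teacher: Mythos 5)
Your argument is correct and is essentially identical to the paper's own proof: rewrite the fiberwise parabolic Monge--Amp\`ere equation as a global identity on $D$ using the ratio $(\omega(t)^n\wedge dV_s)/(\omega^n\wedge dV_s)$, apply the total-space $\ddc$, and use Proposition \ref{P:initial_coho} to cancel the $\ddc F$ term. No further comment is needed.
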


\begin{proof}
It follows from \eqref{E:Fiberwise Parabolic Monge-Ampere} that
\begin{align*}
\frac{\partial}{\partial t}\varphi
=
\log\frac{(\omega+i\partial\bar\partial\varphi)^n\wedge dV_s}{\omega^n\wedge dV_s}
-(n+1)\varphi-F.
\end{align*}
Taking $\ddc$, we have
\begin{align*}
\ddc\paren{\frac{\partial}{\partial t}\varphi}
=
\Theta_{\omega(t)}-\Theta_\omega
-(n+1)\ddc\varphi-\ddc F.
\end{align*}
Since $\omega$ does not depend on $t$, \eqref{E:Cohomology} implies that
\begin{align*}
\pd{}{t}\paren{\omega+\ddc\varphi}
=
\Theta_{\omega(t)}
-(n+1)\omega.
\end{align*}
This completes the proof.
\end{proof}

\begin{remark}
We will call the equation \eqref{E:Relative Kahler-Ricci flow} the {\it fiberwise K\"{a}hler-Ricci flow} on $D$, since the restriction of it to the fiber $D_y$ is equal to the equation \eqref{E:Fiberwise Kahler-Ricci flow}.
This flow was first introduced by Berman in \cite{Berman} with the name ``{\it relative K\"{a}hler-Ricci flow}".
\end{remark}

\subsection{Fiberwise \ke metric}
On the other hand,
Theorem \ref{T:Chau} implies that for all fibers $D_y$, the solution $\varphi_y(t)$ of the parabolic \ma equation \eqref{E:Fiberwise Parabolic Monge-Ampere} converges to the solution
$\psi_y$ of the elliptic \ma equation:
\begin{equation} \label{E:Fiberwise Elliptic Monge-Ampere}
\begin{aligned} 
(\omega_y+i\partial\bar\partial\psi_y)^n
&=e^{(n+1)\psi_y+F_y}\omega_y^n,
\\
\frac{1}{C}\omega_y\leq\omega_y&+i\partial\ov\partial\psi_y\leq C\omega_y.
\end{aligned}
\end{equation}
By the uniqueness of the \ke metric, we have
$$
\omega_y+i\partial\bar\partial\psi_y=\omega_y^{KE},
$$
where $\omega_y^{KE}$ is the unique \ke metric with Ricci curvature $-(n+1)$.
As in Proposition \ref{P:Regularity}, the implicit function theorem for the elliptic \ma operator implies the following
\begin{proposition}[cf. Section 3 in \cite{Choi1}]
The function $\psi:D\rightarrow\RR$, defined by
$$
\psi(x):=\psi_y(x)
$$
where $y=\pi(x)$, is smooth on the total space $D$.
\end{proposition}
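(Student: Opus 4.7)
The strategy is to mimic the proof of Proposition \ref{P:Regularity} verbatim, but now using the implicit function theorem for the \emph{elliptic} complex Monge-Amp\`ere operator rather than the parabolic one. Fix $y_0\in S$, set $\Omega:=D_{y_0}$, and use Ehresmann's fibration theorem to obtain a fiber-preserving diffeomorphism $\Phi:D\to \Omega\times S$ extending smoothly to the boundary. Through $\Phi$, every fiberwise Banach space $\tilde C^{k+\epsilon}(D_y)$ is identified with the single Banach space $\tilde C^{k+\epsilon}(\Omega)$, so the pulled-back metrics $\omega_y$ and functions $F_y$ become a smooth family of data on $\Omega$ parameterized by $y$ in a neighborhood $U$ of $y_0$.

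Next I would introduce the \emph{elliptic Monge--Amp\`ere operator}
\begin{equation*}
\mathcal{N}:U\times \tilde C^{k+\epsilon}(\Omega)\longrightarrow \tilde C^{k-2+\epsilon}(\Omega),\qquad
\mathcal{N}(y,\phi)=\log\frac{(\omega_y+i\partial\bar\partial\phi)^n}{\omega_y^n}-(n+1)\phi-F_y,
\end{equation*}
so that Theorem \ref{T:CY1} (applied to each fiber, using \eqref{E:Cohomology}) yields $\mathcal{N}(y_0,\psi_{y_0})=0$. Computing the partial Fr\'echet derivative at $(y_0,\psi_{y_0})$ gives the linear operator
\begin{equation*}
D_2\mathcal{N}(y_0,\psi_{y_0})(\phi)=\Delta_{KE}\phi-(n+1)\phi,
\end{equation*}
where $\Delta_{KE}$ is the complex Laplacian of $\omega^{KE}_{y_0}=\omega_{y_0}+i\partial\bar\partial\psi_{y_0}$. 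Since $\omega^{KE}_{y_0}$ has bounded geometry (Theorem \ref{T:CY1} gives quasi-isometry with the reference metric, which has bounded geometry of infinite order), and the zeroth-order coefficient $-(n+1)$ is strictly negative, the weak maximum principle together with standard Schauder estimates on bounded-geometry manifolds shows that $D_2\mathcal{N}(y_0,\psi_{y_0})$ is a Banach space isomorphism between $\tilde C^{k+\epsilon}(\Omega)$ and $\tilde C^{k-2+\epsilon}(\Omega)$. This verification is the main technical point, but it is the elliptic counterpart of the corresponding claim in \cite{Chau_Tam} quoted in the proof of Proposition \ref{P:Regularity} and requires no new idea.

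With the isomorphism in hand, the implicit function theorem in Banach spaces produces a Fr\'echet differentiable map $\nu:U\to\tilde C^{k+\epsilon}(\Omega)$ with $\mathcal{N}(y,\nu(y))=0$. By the uniqueness clause of Theorem \ref{T:CY1}, $\nu(y)=\psi_y$ for every $y\in U$. Because $\mathcal{N}$ depends smoothly on $(y,\bar y)$ (the data $\omega_y$ and $F_y$ are smooth in $s,\bar s$), iterating the implicit function theorem shows that all mixed derivatives $(\partial/\partial s)^{l_1}(\partial/\partial\bar s)^{l_2}\psi$ lie in $\tilde C^{k+\epsilon}(\Omega)$ for any $l_1,l_2\geq 0$. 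Combined with the fiberwise smoothness already ensured by Theorem \ref{T:CY1}, this proves that $\psi$ is smooth on $D$, completing the argument. The only step that is not purely formal is the invertibility of the linearization, which is the analog of the parabolic invertibility used in Proposition \ref{P:Regularity} and poses no new obstacle in the elliptic, negatively-curved setting.
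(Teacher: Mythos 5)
Your proposal is correct and follows essentially the same route the paper intends: the paper gives no separate proof but explicitly says the result follows ``as in Proposition \ref{P:Regularity}'' via the implicit function theorem for the elliptic Monge--Amp\`ere operator, which is precisely what you carry out, with the correct linearization $\Delta_{KE}-(n+1)$ and the standard invertibility argument on bounded-geometry H\"older spaces from Cheng--Yau. No gaps.
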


Define a $d$-closed smooth $(1,1)$-form $\rho$ on the total space $D$ by
$$
\rho:=\omega+i\partial\ov\partial\psi.
$$
The form $\rho$ is called the {\it fiberwise \ke metric}, since $\rho|_{D_y}=\omega_y^{KE}$.

\begin{theorem}\label{T:convergence of fiberwise}
The solution of fiberwise K\"ahler-Ricci flow $\omega(t)$ locally uniformly converges to the fiberwise K\"ahler metric $\rho$ on $D$ as $t\rightarrow\infty$.
More precisely,  we have that $\varphi(t)\rightarrow\psi$ in $C^{\infty}_{loc}(D)$.
\end{theorem}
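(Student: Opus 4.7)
The plan is to upgrade the fiberwise convergence $\varphi_y(t)\to\psi_y$ provided by Chau's Theorem \ref{T:Chau} to convergence $\varphi(t)\to\psi$ in $C^\infty_{loc}(D)$ via a compactness-and-uniqueness argument. The key inputs are (i) Chau's parabolic H\"older bounds on $\varphi_y$ uniform in $t\in[0,\infty)$, (ii) the smoothness of $\varphi$ along the base direction established in Proposition \ref{P:Regularity}, and (iii) the uniform-in-$t$ bounds on the base derivatives of $\varphi$ restricted to each fiber from Remark \ref{R:Boundedness of u}.

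The argument would proceed in three stages. First, I would establish that for any compact $K\subset D$ and any $m\in\NN$, the family $\{\varphi(t)\}_{t\geq 0}$ is bounded in $C^m(K)$ uniformly in $t$: fiber-direction derivatives are controlled directly by Chau's theorem, while base-direction derivatives are controlled by combining the implicit function theorem construction of Proposition \ref{P:Regularity} with Remark \ref{R:Boundedness of u}. Second, given any sequence $t_n\to\infty$, an Arzel\`a-Ascoli argument extracts a subsequence $t_{n_j}$ along which $\varphi(t_{n_j})\to\varphi_\infty$ in $C^{m-1}_{loc}(D)$ for some limit $\varphi_\infty$. Third, this limit is identified by restricting to each fiber $D_y$: Chau's theorem gives $\varphi_y(t_{n_j})\to\psi_y$ in $\tilde C^{k+\epsilon}(D_y)$, so in particular pointwise on $D_y$, forcing $\varphi_\infty|_{D_y}=\psi_y=\psi|_{D_y}$ and hence $\varphi_\infty=\psi$ on $D$. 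Since every subsequence admits a sub-subsequence converging to the same limit $\psi$, the full family converges $\varphi(t)\to\psi$ in $C^{m-1}_{loc}(D)$, and taking $m$ arbitrary yields the desired $C^\infty_{loc}$ convergence.

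The main obstacle is the first stage: establishing uniform-in-$t$ control on the mixed derivatives of $\varphi(t)$, locally uniform in the base parameter $y$. While Remark \ref{R:Boundedness of u} guarantees that $(\partial/\partial s)^{l_1}(\partial/\partial\ov{s})^{l_2}\varphi|_{D_y}$ lies in $\tilde C^{k+\epsilon,(k+\epsilon)/2}(D_y\times[0,\infty))$ for each individual $y$, one must further ensure that the corresponding norms remain locally bounded as $y$ varies. This uniformity should be read off from the implicit function theorem construction in the proof of Proposition \ref{P:Regularity}: since the parabolic Monge-Amp\`ere operator $\mathcal{M}$ is smooth between the relevant Banach spaces and its partial Fr\'echet derivative at $(y_0,\varphi_{y_0})$ is a Banach isomorphism, the map $y\mapsto\varphi_y$ is smooth (and hence locally bounded) into $\tilde C^{k+\epsilon,(k+\epsilon)/2}(\Omega\times[0,\infty))$ on a neighborhood of each $y_0\in S$, which, together with fiberwise differentiation of the equation, furnishes the sought-for uniform estimates on the base derivatives.
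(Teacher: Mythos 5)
Your proposal is correct and follows essentially the same route as the paper: locally uniform-in-$t$ bounds on $\varphi(t)$ in $C^k$ of compact sets, obtained from the implicit-function-theorem construction of Proposition \ref{P:Regularity} together with Remark \ref{R:Boundedness of u}, followed by an Arzel\`a--Ascoli extraction and identification of the limit via the fiberwise convergence $\varphi_y(t)\to\psi_y$ from Chau's theorem. The only (inessential) difference is that the paper first establishes a global exponential $C^0$ rate $\norm{\varphi(t)-\psi}_{C^0(D)}\le Ce^{-(n+1)t}$ by applying a maximum principle to $\dot{\varphi}_y$, whereas you identify the subsequential limit fiber by fiber, which suffices for the stated locally uniform convergence.
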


\begin{proof}
It is enough to show that $\varphi(t)$ smoothly converges to $\psi$ on any compact subset of $D$.
More precisely, we will show that for each point $x\in D$, there exists a neighborhood $U$ of $x$ in $D$ such that
$$
\norm{\varphi(t)-\psi}_{C^k(U)}\rightarrow 0
$$
as $t\rightarrow\infty$, for all $k\ge0$.
Before going to the proof, note that for $y\in S$, we already know that as $t\rightarrow\infty$,
\begin{equation}\label{E:fiber_estimate}
\norm{\varphi(t)\vert_{D_y}-\psi\vert_{D_y}}_{\tilde C^{k,\epsilon}(D_y)}\rightarrow0.
\end{equation}

First consider the $C^0$-convergence.
Differentiating \eqref{E:Fiberwise Parabolic Monge-Ampere} with respect to $t$, we get
\begin{equation}
\begin{aligned}
\frac{\partial}{\partial t}\dot{\varphi_y}
&=
\Delta_t\dot{\varphi_y}-(n+1)\dot{\varphi_y},
\\
\dot{\varphi_y}|_{t=0}
&=
F_y.
\end{aligned}
\end{equation}
It follows that
$$
\frac{\partial}{\partial t}(e^{(n+1)t}\dot{\varphi_y})=\Delta_t(e^{(n+1)t}\dot{\varphi_y}).
$$
A maximum principle implies that
$$
\abs{e^{(n+1)t}\dot{\varphi_y}}\leq \sup_{D_y}\abs{F_y}\le C
$$
for some uniform constant $C>0$, independent of $y$.
For $0<t'<t''$, we have
\begin{align*}
\abs{\varphi_y(x,t')-\varphi_y(x,t'')}
&\le
\abs{\int_{t'}^{t''}\dot\varphi_y(x,u)du}
\le
\int_{t'}^{t''}\abs{\dot\varphi_y(x,u)}du \\
&\le
\int_{t'}^{t''}Ce^{-(n+1)u}du 
\le
C\paren{e^{-(n+1)t'}-e^{-(n+1)t''}}.
\end{align*}
By \eqref{E:fiber_estimate}, this implies that 
\begin{equation*}
\norm{\varphi(t)-\psi}_{C^0(D)}\le Ce^{-(n+1)t}.
\end{equation*}

Now we consider the $C^k$-convergence for any fixed $k\in\NN$.
For each $l_1,l_2\in\NN$ with $l_1+l_2\le k$, the proof of Proposition \ref{P:Regularity} implies that
\begin{equation*}
U\ni y \rightarrow 
D_s^{l_1,l_2}\varphi_y(t):=\Big(\frac{\partial}{\partial s}\Big)^{l_1}\Big(\frac{\partial}{\partial\ov s}\Big)^{l_2}\varphi(t)\Big|_{D_y}\in\tilde C^{k+\epsilon,\frac{k+\epsilon}{2}}(D_y\times[0,\infty)).
\end{equation*}
is smooth where $U$ is a neighborhood of $y$.
Hence there exists a uniform constant $C$ which depends only on $l_1,l_2,k$ such that
\begin{equation*}
\sup_{y\in U}
\sum_{l_1+l_2\le k}\norm{D_s^{l_1,l_2}\varphi_y(t)}_{\tilde C^{k+\epsilon,\frac{k+\epsilon}{2}}(D_y\times[0,\infty))}
<C.
\end{equation*}
This implies that there exists a neighborhood $V$ of $x$ in $D$ and an uniform constant $C$ which does not depend on $t$ such that
\begin{equation*}
\norm{\varphi(t)}_{C^k(V)}<C
\end{equation*}
where $C^k(V)$-norm means the usual $C^k$-norm on $V\subset\CC^{n+1}$.
Therefore, the proof is completed by the Arzela-Ascoli theorem and the uniqueness of limit (for the details, see \cite{Cao, Song_Weinkove}).
\end{proof}

Theorem \ref{T:Relative Kahler-Ricci flow} and Theorem \ref{T:convergence of fiberwise} imply the following

\begin{corollary} [cf. Remark 3.4 in \cite{Choi_Yoo}]
The fiberwise K\"ahler-Einstein metric $\rho$ satisfies the equation
\begin{equation}
\Theta_{\rho}=(n+1)\rho,
\end{equation}
where $\Theta_\rho$ is the relative curvature form of $\rho$.
\end{corollary}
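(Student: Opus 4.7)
The plan is to pass to the limit $t\to\infty$ in the fiberwise K\"ahler-Ricci flow equation from Theorem \ref{T:Relative Kahler-Ricci flow}, using the smooth convergence $\varphi(t)\to\psi$ in $C^\infty_{loc}(D)$ supplied by Theorem \ref{T:convergence of fiberwise}. This convergence gives at once $\omega(t)=\omega+i\partial\ov\partial\varphi(t)\to\rho$ and $(n+1)\omega(t)\to(n+1)\rho$ in $C^\infty_{loc}(D)$. For the relative curvature term, observe that in any local chart the positive smooth density $\det(g_{\alpha\ov\beta}(t))$ of $\omega(t)^n\wedge dV_s$ converges in $C^\infty_{loc}$ to the corresponding density of $\rho^n\wedge dV_s$, so applying $i\partial\ov\partial\log$ yields $\Theta_{\omega(t)}\to\Theta_\rho$ in $C^\infty_{loc}(D)$. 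Substituting into \eqref{E:Relative Kahler-Ricci flow} therefore gives $\frac{\partial}{\partial t}\omega(t)\to\Theta_\rho-(n+1)\rho$ in $C^\infty_{loc}(D)$.

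It remains to identify this limit with $0$. Fix any point $x\in D$ and any scalar coefficient $f(t)$ of $\omega(t)(x)$ in a local frame; then $f\in C^1([0,\infty))$, $f(t)$ converges to the corresponding coefficient of $\rho(x)$, and by the previous paragraph $f'(t)$ converges to some limit $\ell$. A differentiable real-valued function on $[0,\infty)$ that converges at infinity cannot have its derivative tend to a nonzero constant (otherwise, by the mean value theorem applied to consecutive unit intervals, it would grow linearly), so $\ell=0$. Applying this coefficient by coefficient forces $\Theta_\rho-(n+1)\rho=0$ on $D$, which is the desired identity.

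The one conceptual point to be careful about is that the convergence $\omega(t)\to\rho$ does not by itself imply $\frac{\partial}{\partial t}\omega(t)\to 0$. The trick is to use the flow equation in the reverse direction: it first guarantees that $\frac{\partial}{\partial t}\omega(t)$ converges at all, to $\Theta_\rho-(n+1)\rho$, and only then does the elementary observation above pin this limit down to zero. Everything else is continuity of the operations $\omega\mapsto\omega^n\wedge dV_s$ and $u\mapsto i\partial\ov\partial\log u$ with respect to the locally uniform smooth topology, which is immediate once one knows $\omega(t)$ stays K\"ahler on each fiber and $\varphi(t)\to\psi$ in $C^\infty_{loc}(D)$.
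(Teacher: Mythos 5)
Your proposal is correct and follows exactly the route the paper intends: the paper offers no written proof beyond asserting that Theorem \ref{T:Relative Kahler-Ricci flow} and Theorem \ref{T:convergence of fiberwise} imply the corollary, and your argument is a careful elaboration of precisely that implication. In particular, your mean-value-theorem observation correctly supplies the one nontrivial step the paper leaves implicit, namely why the time derivative must vanish in the limit.
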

%------------------------------------------------------------------

\section{Geodesic curvature of the fiberwise K\"{a}hler-Ricci flow}

In this section, we introduce the horizontal lift, which is developed by Siu and Schumacher (cf. \cite{Siu, Schumacher}), and the geodesic curvature which measures the positivity of a fiberwise K\"ahler form. We also discuss Berman's parabolic PDE which the geodesic curvature of the fiberwise K\"ahler-Ricci flow satisfies.

\subsection{Horizontal lift and Geodesic curvature}
Let $D$ be a domain in $\CC^{n+1}$ such that every fiber $D_y$ is a domain in $\CC^n$ for $y\in S:=\pi(D)$. Denote by $v:=\frac{\partial}{\partial s}\in T'_yS$ the coordinate vector field in the base.

\begin{definition}
Let $\tau$ be a $d$-closed smooth real $(1,1)$-form on $D$ whose restriction to the fibers $\tau|_{D_y}$ is positive definite.
\begin{itemize}
\item[(1)] A vector field $v_\tau$ of type $(1,0)$ is called the \emph{horizontal lift} along $D_y$ of $v$ with respect to $\tau$ if $v_\tau$ satisfies the following:
\begin{itemize}
\item [(\romannumeral1)]$\inner{v_\tau,w}_\tau=0$ for all $w\in{T'D_y}$,
\item [(\romannumeral2)]$d\pi(v_\tau)=v$.
\end{itemize}
\item[(2)] The \emph{geodesic curvature} $c(\tau)(v)$ of $\tau$ along $v$ is defined by the norm of $v_\tau$ with respect to the sesquilinear form $\inner{\cdot,\cdot}_\tau$ induced by $\tau$, namely,
\begin{equation*}
c(\tau):=c(\tau)(v)=\inner{v_\tau,v_\tau}_\tau.
\end{equation*}
\end{itemize}
\end{definition}

\begin{remark}\label{R:geo_curv}
We have the following remarks.
\begin{itemize}
\item[(1)]
Under a local coordinate system $(z^1,\ldots,z^n,s)$, $\tau$ can be written as
$$
\tau
=
i\paren{
\tau_{s\bar s}ds\wedge d\ov s
+
\tau_{\alpha\bar s}dz^{\alpha}\wedge d\ov{s}
+
\tau_{s\bar s}ds\wedge dz^{\bar\beta}
+
\tau_{\alpha\bar\beta}dz^{\alpha}\wedge dz^{\bar\beta}
}.
$$
Then the horizontal lift $v_\tau$ and the geodesic curvature $c(\tau)$ are given by
$$
v_{\tau}=\frac{\partial}{\partial s}-\tau_{s\bar\beta}\tau^{\bar\beta\alpha}\frac{\partial}{\partial z^{\alpha}}
\;\;\;\text{and}\;\;\;
c(\tau)=
\tau_{s\bar{s}}-\tau_{s\bar\beta}\tau^{\bar\beta\alpha}\tau_{\alpha\bar{s}},
$$
where $(\tau^{\bar\beta\alpha})$ is the inverse matrix of $(\tau_{\alpha\ov\beta})$.
\item[(2)] The following identity is well-known and important (cf. \cite{Schumacher}):
\begin{equation*}
\frac{\tau^{n+1}}{(n+1)!}=c(\tau)\cdot\frac{\tau^n}{n!}\wedge ids\wedge d\bar{s}.
\end{equation*}
Since $\tau|_{D_y}>0$, this implies that $c(\tau)\geq0$ if and only if $\tau$ is a semi-positive real $(1,1)$-form on $D$. Furthermore, $c(\tau)>0$ if and only if $\tau$ is positive.
\end{itemize}
\end{remark}

\subsection{Berman's parabolic PDE}
Let $D$ be a holomorphic family of bounded strongly pseudoconvex domains.
Then the geodesic curvature $c(\omega(t))$ satisfies a certain parabolic PDE, which was first computed by Berman for a family of canonically polarized compact K\"ahler manifolds.
The following theorem is essentially the same with Berman's one, but we will give a precise proof for the reader's convenience.

\begin{theorem} [cf. Theorem 4.7 in \cite{Berman}]\label{berman_flow}
For each fiber $D_y$, $c(\omega(t))|_{D_y}$ evolves by
\begin{equation}\label{E:Evolution}
\paren{\frac{\partial}{\partial t}-\Delta_t}c(\omega(t))+(n+1)c(\omega(t))
=
\norm{\ov{\partial}v_{\omega(t)}}^2,
\end{equation}
where $\Delta_t$ is the Laplace-Beltrami operator of the K\"ahler metric $\omega_y(t):=\omega(t)|_{D_y}$.
\end{theorem}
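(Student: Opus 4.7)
The plan is to split the identity into two pieces: a parabolic piece that comes from differentiating $c(\omega(t))$ along the fiberwise K\"ahler--Ricci flow, and an elliptic (Schumacher-type) piece that holds fiberwise for any $d$-closed real $(1,1)$-form that is positive along the fibers. Each piece will involve the same curvature contraction $\Theta_{\omega(t)}(v_{\omega(t)},\overline{v_{\omega(t)}})$, and subtracting one from the other will eliminate that term and leave \eqref{E:Evolution}.

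For the parabolic piece, write $g(t):=-\log(-r)+\varphi(t)$ so that $\omega(t)=i\partial\bar\partial g(t)$ on $D$ and the components $g(t)_{A\bar B}$ for $A,B\in\{1,\ldots,n,s\}$ are ordinary mixed partials. Theorem~\ref{T:Relative Kahler-Ricci flow} gives the componentwise evolution
\[
\frac{\partial}{\partial t}g(t)_{A\bar B}=(\Theta_{\omega(t)})_{A\bar B}-(n+1)g(t)_{A\bar B}.
\]
Differentiating the Schur-type formula $c(\omega(t))=g(t)_{s\bar s}-g(t)_{s\bar\beta}g(t)^{\bar\beta\alpha}g(t)_{\alpha\bar s}$ in $t$, using this together with $\partial_t g(t)^{\bar\beta\alpha}=-g(t)^{\bar\beta\mu}(\partial_t g(t)_{\mu\bar\nu})g(t)^{\bar\nu\alpha}$, and expanding the four resulting terms, the $(n+1)$-contributions collapse precisely to $-(n+1)c(\omega(t))$, while the four $\Theta$-contributions reassemble into the contraction with the horizontal lift $v_{\omega(t)}=\partial_s-g(t)_{s\bar\beta}g(t)^{\bar\beta\alpha}\partial_\alpha$. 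This yields
\[
\frac{\partial}{\partial t}c(\omega(t))+(n+1)c(\omega(t))=\Theta_{\omega(t)}(v_{\omega(t)},\overline{v_{\omega(t)}}).
\]

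For the Schumacher-type identity, the claim is the pointwise fiberwise formula
\[
\Theta_{\omega(t)}(v_{\omega(t)},\overline{v_{\omega(t)}})-\Delta_t c(\omega(t))=\|\bar\partial v_{\omega(t)}\|^2.
\]
I would prove it by freezing $t$, fixing $x_0\in D_y$, and choosing holomorphic coordinates $(z^1,\ldots,z^n,s)$ that are K\"ahler normal at $x_0$ for the fiber metric $\omega_y(t)$, so that $g(t)_{\alpha\bar\beta}(x_0)=\delta_{\alpha\beta}$ and all fiber derivatives $g(t)_{\alpha\bar\beta\gamma}(x_0)=g(t)_{\alpha\bar\beta\bar\gamma}(x_0)=0$. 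Then $\Delta_t c(\omega(t))=g(t)^{\bar\beta\alpha}\partial_\alpha\partial_{\bar\beta}c(\omega(t))$ and $(\Theta_{\omega(t)})_{A\bar B}=\partial_A\partial_{\bar B}\log\det(g(t)_{\mu\bar\nu})$ simplify substantially at $x_0$, because the first-derivative terms of the inverse metric drop out; both sides become polynomial expressions in the third derivatives $g_{s\bar\alpha\gamma}$, $g_{s\bar\alpha\bar\gamma}$ and the contracted fourth derivatives $g_{\mu\bar\mu\alpha\bar\beta}$, $g_{\mu\bar\mu s\bar\beta}$, $g_{\mu\bar\mu s\bar s}$. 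The fourth-derivative pieces cancel pairwise by commutativity of partials, the $|g_{s\bar\alpha\gamma}|^2$ terms also cancel against one another, and what remains is $\sum_{\alpha,\gamma}|g_{s\bar\alpha\bar\gamma}|^2$; this matches $\|\bar\partial v_{\omega(t)}\|^2$ at $x_0$ because, with the chosen coordinates, $\partial_{\bar\gamma}(g(t)_{s\bar\beta}g(t)^{\bar\beta\alpha})(x_0)=g(t)_{s\bar\alpha\bar\gamma}$, the $\partial g^{-1}$ correction being killed by the normal condition. Combining the parabolic identity with the Schumacher identity gives \eqref{E:Evolution}.

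The only substantive obstacle is the bookkeeping in the Schumacher step: one has to carry both $g_{s\bar\alpha\gamma}$ and $g_{s\bar\alpha\bar\gamma}$, keep track of which combinations are genuine complex conjugates, and verify that only the antiholomorphic mixed derivative survives in the final expression. It is essential that the K\"ahler normal condition be imposed only on the fiber and not on the total space: the $\partial_s$-derivatives of the fiber metric encode precisely the horizontal information about the family, and it is exactly that data that reappears as $\|\bar\partial v_{\omega(t)}\|^2$.
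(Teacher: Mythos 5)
Your proposal is correct; I checked both halves against the paper's computation and they close up. The route is a genuinely different organization of the same calculation. The paper never isolates the curvature contraction $\Theta_{\omega(t)}(v_{\omega(t)},\overline{v_{\omega(t)}})$: it expands $\Delta_t c(\omega(t))$ into five terms $I_0,\dots,I_4$ and, inside that expansion, repeatedly substitutes the componentwise flow equation $\paren{\log\det(g_{\alpha\bar\beta})}_{A\bar B}=\paren{\pd{}{t}g}_{A\bar B}+(n+1)g_{A\bar B}$, so that after cancellation only $\pd{}{t}c+(n+1)c$ and the term $I_3=\norm{\ov\partial v_{\omega(t)}}^2$ remain. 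You instead factor through the intermediate quantity $\Theta_{\omega(t)}(v_{\omega(t)},\overline{v_{\omega(t)}})$: the parabolic half $\pd{}{t}c(\omega(t))+(n+1)c(\omega(t))=\Theta_{\omega(t)}(v_{\omega(t)},\overline{v_{\omega(t)}})$ follows mechanically from Theorem \ref{T:Relative Kahler-Ricci flow} and $\pd{}{t}g^{\bar\beta\alpha}=-g^{\bar\beta\mu}(\pd{}{t}g_{\mu\bar\nu})g^{\bar\nu\alpha}$ (the four $\Theta$-terms are precisely the contraction against $v_{\omega(t)}=\partial_s-g_{s\bar\beta}g^{\bar\beta\alpha}\partial_\alpha$ and its conjugate), while the elliptic half $\Theta_{\tau}(v_\tau,\overline{v_\tau})-\Delta_{\tau_y}c(\tau)=\norm{\ov\partial v_\tau}^2$ is a flow-independent Schumacher-type identity valid for any $d$-closed real $(1,1)$-form positive on fibers. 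Your normal-coordinate bookkeeping in the elliptic half is right: the contracted fourth derivatives cancel by commutativity of partials, the $-\sum\abs{g_{s\bar\mu\nu}}^2$ contributed by $\Theta_{s\bar s}$ cancels against $+\sum\abs{g_{s\bar\alpha\gamma}}^2$ coming from $-\Delta_t(g_{s\bar\beta}g^{\bar\beta\alpha}g_{\alpha\bar s})$, and only $\sum\abs{g_{s\bar\alpha\bar\gamma}}^2=\norm{\ov\partial v_{\omega(t)}}^2$ survives, exactly matching the paper's $I_3$. What your decomposition buys is transparency: the only input from the flow is the trivial parabolic half, and the elliptic identity can be reused verbatim with $\rho$ in place of $\omega(t)$ (where $\Theta_\rho=(n+1)\rho$) to recover Schumacher's equation $-\Delta c(\rho)+(n+1)c(\rho)=\norm{\ov\partial v_\rho}^2$ and hence Corollary \ref{C:Main} without passing to the limit of the flow. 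What the paper's version buys is a single self-contained computation that never needs to name or justify the intermediate identity.
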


\begin{proof}
Note that 
$\omega(t)=i\partial\ov\partial g(t)$ on $D$,
where $g(t):=-\log(-r)+\varphi(t)$.
During this proof, for simplicity, we will omit $t$ for the function $g(t)=:g$.
Then $\omega(t)$ can be written as follows:
$$
\omega(t)
=
i
\paren{
	g_{s\bar{s}}ds\wedge d\ov{s}
	+
	g_{\alpha\bar{s}}dz^{\alpha}\wedge d\ov{s}
	+
	g_{s\bar\beta}ds\wedge dz^{\bar\beta}
	+
	g_{\alpha\bar{\beta}}dz^{\alpha}\wedge dz^{\bar\beta}
}.
$$
As we saw in Remark \ref{R:geo_curv}, the geodesic curvature is given by
\begin{equation*}
c(\omega(t))
=
g_{s\bar{s}}
-
g_{s\bar\beta}g^{\bar\beta\alpha}g_{\alpha\bar{s}}.
\end{equation*}
Thus we have
\begin{equation*}
\pd{}{t}c(\omega(t))
=
\paren{\pd{}{t}g}_{s\bar{s}}
-
\paren{\pd{}{t}g}_{s\bar\beta}g^{\bar\beta\alpha}g_{\alpha\bar{s}}
-
g_{s\ov{\beta}}\paren{\pd{}{t}g^{\bar\beta\alpha}}g_{\alpha\bar{s}}
-
g_{s\bar\beta}g^{\bar\beta\alpha}\paren{\pd{}{t}g}_{\alpha\bar s}.
\end{equation*}
On the other hand,
\begin{align*}
\Delta_t c(\omega(t))
=&
\Delta_t g_{s\ov{s}}
-
\Delta_t(g_{s\bar\beta}g^{\bar\beta\alpha})g_{\alpha\bar s}
-
g^{\bar\delta\gamma}
(g_{s\bar\beta}g^{\bar\beta\alpha})_\gamma(g_{\alpha\bar s})_{\bar\delta}
\\
&-
g^{\bar\delta\gamma}
(g_{s\bar\beta}g^{\bar\beta\alpha})_{\bar\delta}(g_{\alpha\bar s})_\gamma
-
(g_{s\bar\beta}g^{\bar\beta\alpha})\Delta_t g_{\alpha\bar s}\\
=&I_0-I_1-I_2-I_3-I_4.
\end{align*}
Notice that
$
(\log\det(g_{\alpha\ov{\beta}}))_{\ov{s}}
=
g^{\bar\delta\gamma}(g_{\bar s})_{\gamma\bar\delta}=\Delta_tg_{\ov{s}}.
$
This implies that
\begin{align*}
I_0
:=
\Delta_t g_{s\bar s}
&=
g^{\bar\delta\gamma}(g_{s\ov{s}})_{\gamma\bar\delta}
=
g^{\bar\delta\gamma}(g_{\gamma\bar\delta})_{s\bar s}
=
(g^{\bar\delta\gamma}(g_{\gamma\delta})_{\ov{s}})_s
-
(g^{\bar\delta\gamma})_s(g_{\gamma\bar\delta})_{\bar{s}}
\\
&=
(g^{\bar\delta\gamma}(g_{\bar s})_{\gamma\bar\delta})_s
+
g^{\bar\delta\alpha}(g_{\alpha\bar\beta})_s
g^{\bar\beta\gamma}(g_{\gamma\bar\delta})_{\bar s}
\\
&=(\log\det(g_{\alpha\ov{\beta}}))_{s\bar s}
+
g^{\bar\delta\alpha}(g_{\alpha\bar\beta})_s
g^{\bar\beta\gamma}(g_{\gamma\bar\delta})_{\bar s}
\\
&=\paren{\frac{\partial}{\partial t}g}_{s\bar s}
+
(n+1)g_{s\bar s}
+
g^{\bar\delta\alpha}(g_{\alpha\bar\beta})_s
g^{\bar\beta\gamma}(g_{\gamma\bar\delta})_{\bar s}.
\end{align*}
In the last equality, we used the fact that $\omega(t)$ satisfies the equation \eqref{E:Relative Kahler-Ricci flow} so that
$$
\paren{\frac{\partial}{\partial t}g}_{s\ov{s}}
=
(\log\det(g_{\alpha\ov{\beta}}))_{s\ov{s}}-(n+1)g_{s\ov{s}}.
$$
From now on, we fix a point and choose a normal coordinate $(z^1,\ldots,z^n)$ such that
\begin{equation*}\label{normal}
\frac{\partial g_{\alpha\bar\beta}}{\partial z^{\gamma}}(x)=0=\frac{\partial g_{\alpha\bar\beta}}{\partial z^{\bar\delta}}(x).
\end{equation*}
Then the term $I_1:=\Delta_t(g_{s\bar\beta}g^{\bar\beta\alpha})g_{\alpha\bar s}$ can be simplified as follows:
\begin{align*}
I_1
=&
g_{s\ov{\alpha}}\Delta_t(g^{\bar\beta\alpha})g_{\gamma\ov{s}}
+
g^{\bar\delta\gamma}(g_{s\ov{\alpha}})_{\bar\delta}(g^{\bar\beta\alpha})_\gamma g_{\gamma\ov{s}}
+
g^{\bar\delta\gamma}(g_{s\bar\beta})_\gamma(g^{\bar\beta\alpha})_{\bar\delta} g_{\alpha\bar s}
+
\Delta_t(g_{s\bar\beta})g^{\bar\beta\alpha}g_{\alpha\bar s}\\
=&
g_{s\bar\beta}\Delta_t(g^{\bar\beta\alpha})g_{\alpha\bar s}
+
g^{\bar\beta\alpha}\Delta_t(g_{s\bar\beta})g_{\alpha\bar s}.
\end{align*}
To compute the first term, note that
\begin{align*}
(g^{\bar\beta\alpha})_{\gamma\bar\delta}
&=
(-g^{\bar\beta\sigma}(g_{\sigma\bar\tau})_{\bar\delta}g^{\bar\tau\alpha})_\gamma
\\
&=
-(g^{\bar\beta\sigma})_\gamma(g_{\sigma\bar\tau})_{\bar\delta}g^{\bar\tau\alpha}
-g^{\bar\beta\sigma}(g_{\sigma\bar\tau})_{\gamma\bar\delta}g^{\bar\tau\alpha}
-g^{\bar\beta\sigma}(g_{\sigma\bar\tau})_{\bar\delta}(g^{\bar\tau\alpha})_\gamma\\
&=-g^{\bar\beta\sigma}(g_{\sigma\bar\tau})_{\gamma\bar\delta}g^{\bar\tau\gamma}.
\end{align*}
This implies that
\begin{align*}
\Delta_t\paren{g^{\bar\beta\alpha}}
=
g^{\bar\delta\gamma}\paren{g^{\bar\beta\alpha}}_{\gamma\bar\delta}
&=
-g^{\bar\beta\sigma}g^{\bar\tau\alpha}
\paren{g^{\bar\delta\gamma}\paren{g_{\sigma\bar\tau}}_{\gamma\bar\delta}}
=
-g^{\bar\beta\sigma}g^{\bar\tau\alpha}
\paren{\log\det\paren{g_{\gamma\bar\delta}}}_{\sigma\bar\tau}
\\
&=
-g^{\bar\beta\sigma}g^{\bar\tau\alpha}\paren{\frac{\partial}{\partial t}g_{\sigma\bar\tau}}
-(n+1)g^{\bar\beta\sigma}g^{\bar\tau\alpha}g_{\sigma\bar\tau}
\\
&=
-g^{\bar\beta\sigma}g^{\bar\tau\alpha}\paren{\frac{\partial}{\partial t}g_{\sigma\bar\tau}}
-(n+1)g^{\bar\beta\alpha}.
\end{align*}
In the last second equality, we used the fact that the equation \eqref{E:Relative Kahler-Ricci flow} implies that
\begin{equation*}
\paren{\frac{\partial}{\partial t}g}_{\sigma\bar\tau}
=
\paren{\log\det(g_{\alpha\ov{\beta}})}_{\sigma\bar\tau}-(n+1)g_{\sigma\bar\tau}.
\end{equation*}
The equation \eqref{E:Relative Kahler-Ricci flow} also implies that 
$\Delta_t(g_{s\bar\beta})=\frac{\partial}{\partial t}g_{s\bar\beta}+(n+1)g_{s\bar\beta}$. 
Then we have
\begin{align*}
I_1
&=
g_{s\bar\beta}\Delta_t(g^{\bar\beta\alpha})g_{\alpha\bar s}
+
\Delta_t\paren{g_{s\bar\beta}}g^{\bar\beta\alpha}g_{\alpha\bar s}
\\
&=
-g_{s\bar\beta}
\paren{
	g^{\bar\beta\sigma}g^{\bar\tau\alpha}\paren{\pd{}{t}g_{\sigma\bar\tau}}
	+
	(n+1)g^{\bar\beta\alpha}
}
g_{\alpha\bar s}
+
\paren{
	\frac{\partial}{\partial t}g_{s\bar\beta}+(n+1)g_{s\bar\beta}
}
g^{\bar\beta\alpha}g_{\alpha\bar s}
\\
&=
-\paren{\pd{}{t}g_{\sigma\bar\tau}}
g^{\bar\beta\sigma}g^{\bar\tau\alpha}g_{s\bar\beta}g_{\alpha\bar s}
+
\paren{\frac{\partial}{\partial t}g_{s\bar\beta}}g^{\bar\beta\alpha}g_{\alpha\bar s},
\end{align*}
and
\begin{align*}
I_4
:=
g_{s\bar\beta}g^{\bar\beta\alpha}\Delta_t g_{\alpha\bar s}
=
g_{s\bar\beta}g^{\bar\beta\alpha}\paren{\pd{}{t}g_{\alpha\bar s}}
+
(n+1)g_{s\bar\beta}g^{\bar\beta\alpha}g_{\alpha\bar s}.
\end{align*}
Since our coordinate is normal,
$$
I_2
:=
g^{\bar\delta\gamma}
(g_{s\bar\beta}g^{\bar\beta\alpha})_\gamma(g_{\alpha\bar s})_{\bar\delta}
=
g^{\bar\delta\gamma}
(g_{s\ov{\alpha}})_\gamma 
g^{\bar\beta\alpha}(g_{\gamma\ov{s}})_{\bar\delta}
=
g^{\bar\delta\gamma}
g^{\bar\beta\alpha}(g_{s\ov{\alpha}})_\gamma
(g_{\gamma\ov{s}})_{\bar\delta}.
$$
Therefore, we have
\begin{align*}
\Delta_t c(\omega(t))
=&
I_0-I_1-I_2-I_3-I_4\\
=&
\paren{\frac{\partial}{\partial t}g}_{s\bar s}
+
(n+1)g_{s\bar s}
+
g^{\bar\delta\alpha}(g_{\alpha\bar\beta})_s
g^{\bar\beta\gamma}(g_{\gamma\bar\delta})_{\bar s}
\\
&
+\paren{\pd{}{t}g_{\sigma\bar\tau}}
g^{\bar\beta\sigma}g^{\bar\tau\alpha}g_{s\bar\beta}g_{\alpha\bar s}
-
\paren{\frac{\partial}{\partial t}g_{s\bar\beta}}g^{\bar\beta\alpha}g_{\alpha\bar s}
-
g^{\bar\delta\gamma}
g^{\bar\beta\alpha}(g_{s\ov{\alpha}})_\gamma
(g_{\gamma\ov{s}})_{\bar\delta}.
\\
&
-I_3
-
g_{s\bar\beta}g^{\bar\beta\alpha}\paren{\pd{}{t}g_{\alpha\bar s}}
-
(n+1)g_{s\bar\beta}g^{\bar\beta\alpha}g_{\alpha\bar s}
\\
=&
\paren{\paren{\pd{}{t}g}_{s\bar s}
-
g_{s\bar\beta}\paren{\pd{}{t}g^{\bar\beta\alpha}}g_{\alpha\bar s}
-
\paren{\pd{}{t}g_{s\bar\beta}}g^{\bar\beta\alpha}g_{\alpha\bar s}
-
g_{s\bar\beta}g^{\bar\beta\alpha}
\paren{\pd{}{t}g_{\alpha\bar s}}}
\\
&
+
(n+1)\paren{g_{s\bar s}-g_{s\bar\beta}g^{\bar\beta\alpha}g_{\alpha\bar s}}
-
I_3
\\
=&
\frac{\partial}{\partial t}c(\omega(t))+(n+1)c(\omega(t))-I_3.
\end{align*}
Hence it is enough to show that
$I_3=\norm{\ov{\partial}v_{\omega(t)}}^2$.
Remark \ref{R:geo_curv} says that
$$
\ov{\partial}v_{\omega(t)}
=
\paren{-(g_{s\bar\beta})_{\bar\delta}g^{\bar\beta\alpha}
-
g_{s\bar\beta}(g^{\bar\beta\alpha})_{\bar\delta}}
dz^{\bar\delta}\otimes\pd{}{z^\alpha}.
$$
In the normal coordinates, we have
\begin{align*}
\norm{\ov{\partial}v_{\omega(t)}}^2
=
g^{\bar\tau\alpha}
(g_{s\bar\tau})_{\bar\delta}
\ov{g^{\bar\sigma\beta}
(g_{s\bar\sigma})_{\bar\gamma}}
g_{\alpha\ov{\beta}}
g^{\bar\delta\gamma}
=
g^{\bar\tau\alpha}
g^{\bar\delta\gamma}
(g_{s\bar\tau})_{\bar\delta}
(g_{\bar s\alpha})_\gamma.
\end{align*}
On the other hand, 
$$
I_3
=
g^{\bar\delta\gamma}
(g_{s\bar\beta}g^{\bar\beta\alpha})_{\bar\delta}(g_{\alpha\bar s})_\gamma
=
g^{\bar\delta\gamma}
(g_{s\bar\beta})_{\bar\delta}g^{\bar\beta\alpha}(g_{\alpha\bar s})_\gamma
=
g^{\bar\delta\gamma}g^{\bar\beta\alpha}
(g_{s\bar\beta})_{\bar\delta}(g_{\alpha\bar s})_\gamma.
$$
This completes the proof.
\end{proof}

\begin{remark}
For a holomorphic family of canonically polarized compact K\"{a}hler manifolds, the positivity of $\omega(t)$ can be immediately proved by applying the standard weak and strong parabolic maximum principle to the equation \eqref{E:Evolution} (see Corollary 4.9 in \cite{Berman}).
The standard weak maximum principle, however, does not hold in general if the manifold is non-compact.
In the next section, we will use a version of weak parabolic maximum principle for non-compact manifolds due to Ni.
\end{remark}

%------------------------------------------------------------------
\section{Positivity of fiberwise K\"{a}hler-Ricci flows}

Fix an arbitrary point $y\in S$. 
Denote its fiber by $\Omega:=D_y$.
Throughout this section, we will omit the index $y$ for the defining function $r_y$ and the K\"{a}hler metrics $\omega_y$ and $\omega_y(t)$.
Let $g:=-\log(-r)$ be the strictly plurisubharmonic function on $\Omega$.
Then $\omega=i\ddbar g$ is a complete K\"ahler metric on $\Omega$ satisfying
\begin{equation*}
\abs{dg}_\omega^2
=
g^{\alpha\bar\beta}g_\alpha g_{\bar\beta}
=
\frac{\abs{\partial r}^2}{\abs{\partial r}^2-r}
\leq1.
\end{equation*}
By Theorem \ref{T:Chau}, there exist one parameter family of K\"{a}hler metrics $\omega(t):=i\partial\ov\partial g(t)$ on $\Omega$ satisfying the K\"{a}hler-Ricci flow:
\begin{equation} \label{E:KRF_max}
\begin{aligned}
\pd{}{t}\omega(t)&=-\ric(\omega(t))-(n+1)\omega(t),\medskip\\
\omega(0)&=\omega.
\end{aligned}
\end{equation}
We also know that $\omega(t)$ converges to the unique complete K\"ahler-Einstein metric as $t\rightarrow\infty$. 
Moreover, there exists a constant $C>0$ (independent of $t$) such that
\begin{equation}\label{E:Quasi-iso}
\frac{1}{C}\omega
\le
\omega(t)
\le
C\omega.
\end{equation}
We denote the volume forms by $dV_t:=\frac{\omega(t)^n}{n!}$ and $dV_0:=\frac{\omega^n}{n!}$. 

\subsection{Parabolic maximum principle}
The following theorem is essentially the same with Ni's parabolic maximum principle in \cite{Ni}, except it is expressed by a plurisubharmonic exhaustion function instead of the distance function.

\begin{theorem} [cf. Theorem 2.1 in \cite{Ni}]\label{T:Maximum_Principle}
Let $f$ be a smooth function on $\Omega\times[0,T)$ satisfying
$$
\paren{\pd{}{t}-\Delta_t}f\geq0\ \ \ {\it whenever}\  f\leq0. 
$$
Assume that there exists a constant $b>0$ such that 
\begin{equation}\label{growth_control}
\int_0^{T}\int_{\Omega}(-r)^b(f_-)^2\ dV_tdt<\infty,
\end{equation}
where $f_-:=-\min\{f,0\}$.
If $f\ge0$ on $\Omega$ at $t=0$, then $f\ge0$ on $\Omega\times[0,T)$.
\end{theorem}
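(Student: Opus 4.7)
The plan is to adapt Ni's argument, replacing his distance-based Gaussian cutoff with a polynomial weight tailored to the boundary behavior of $r$. I first regularize: let $\rho_\epsilon\in C^2(\RR)$ be a convex approximation of $s\mapsto\max(-s,0)^2$ from below, with $\rho_\epsilon'\le 0$, $\rho_\epsilon''\ge 0$, and $(\rho_\epsilon')^2 \le c_0\,\rho_\epsilon\rho_\epsilon''$. The chain rule, combined with the hypothesis $(\pd{}{t}-\Delta_t)f\ge 0$ wherever $f\le 0$, yields the pointwise inequality
\begin{equation*}
\paren{\pd{}{t}-\Delta_t}\rho_\epsilon(f) = \rho_\epsilon'(f)\paren{\pd{}{t}-\Delta_t}f - \rho_\epsilon''(f)|\nabla f|^2_t \le 0
\end{equation*}
on $\Omega$.

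For the test function, the key geometric input (used at the start of this section) is $|dr|_\omega^2\le (-r)^2$; combined with \eqref{E:Quasi-iso}, this gives $|d((-r)^{b/2})|_{\omega(t)}^2 \le C\,(-r)^b$. Pick $\chi\in C^\infty([0,\infty),[0,1])$ with $\chi\equiv 0$ on $[0,1/2]$ and $\chi\equiv 1$ on $[1,\infty)$, set $\chi_R(x):=\chi(R(-r(x)))$, and define $\eta_R := \chi_R\cdot(-r)^{b/2}$, which is compactly supported in $\Omega$. The same inequality $|dr|_\omega\le -r$ yields $|d\chi_R|_{\omega(t)}\le C$ uniformly in $R$, with $d\chi_R$ supported in the thin annulus $\left\{1/(2R)\le -r\le 1/R\right\}$, and $\chi_R\nearrow 1$ as $R\to\infty$.

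Set $\Phi_R^\epsilon(t) := \int_\Omega \eta_R^2\,\rho_\epsilon(f)\,dV_t$, so $\Phi_R^\epsilon(0)=0$. Multiplying the pointwise inequality by $\eta_R^2$, integrating by parts (no boundary terms, by compact support), and using Cauchy-Schwarz with $(\rho_\epsilon')^2\le c_0\rho_\epsilon\rho_\epsilon''$ to absorb the gradient cross-term gives $\int_\Omega \eta_R^2\,\pd{}{t}\rho_\epsilon(f)\,dV_t \le c_0\int_\Omega |d\eta_R|^2_{\omega(t)}\rho_\epsilon(f)\,dV_t$. Combining this with $|\pd{}{t}\log dV_t|\le C$ along the flow (bounded curvature by Chau's theorem) and the decomposition $|d\eta_R|^2 \le 2|d\chi_R|^2(-r)^b + 2\chi_R^2|d((-r)^{b/2})|^2$---where the second piece is absorbed into $C\Phi_R^\epsilon$ via the key pointwise bound above---I obtain
\begin{equation*}
\frac{d}{dt}\Phi_R^\epsilon(t) \le C_1\,\Phi_R^\epsilon(t) + C_2\int_{\left\{1/(2R)\le -r\le 1/R\right\}}(-r)^b\rho_\epsilon(f)\,dV_t.
\end{equation*}

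Gronwall then bounds $\Phi_R^\epsilon(t)$ by $Ce^{C_1 t}$ times the time-integral of the annular term. Since $\rho_\epsilon(f) \le f_-^2$, the integrand is dominated by $(-r)^b f_-^2 \in L^1(\Omega\times[0,T))$ by \eqref{growth_control}, and absolute continuity of the finite integral sends the annular term to $0$ as $R\to\infty$. Monotone convergence ($\chi_R\nearrow 1$) gives $\int_\Omega(-r)^b\rho_\epsilon(f)\,dV_t = 0$, and Fatou with $\epsilon\to 0$ forces $f_-\equiv 0$. The main obstacle---in contrast to the standard compact parabolic maximum principle---is finding a cutoff whose gradient is controlled by the very weight appearing in \eqref{growth_control}; this works precisely because $|d((-r)^{b/2})|^2_{\omega(t)}\le C(-r)^b$, a direct consequence of the specific form $\omega = i\partial\bar\partial(-\log(-r))$ of the reference metric.
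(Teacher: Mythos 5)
Your argument is correct in its essentials, but it follows a genuinely different route from the paper's. The paper runs the Karp--Li/Ni integral argument almost verbatim, with the exhaustion $g=-\log(-r)$ standing in for the distance function: it normalizes $f$ by $\exp\paren{\int_0^t\tfrac12(S_*+n(n+1))}$ to absorb the time-derivative of $dV_t$ through the \emph{infimum} of the scalar curvature, introduces the time-dependent Gaussian weight $\tilde g=-g^2/(4C(2T'-t))$ satisfying the key cancellation $\abs{\nabla\tilde g}^2+\partial_t\tilde g\le0$, splits $[0,T']$ into short intervals so that $e^{\tilde g}\le(-r)^b$ links the weight to the hypothesis \eqref{growth_control}, and kills the error term via a cutoff $\chi(g/a)$ whose gradient is $O(1/a)$. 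You instead fold a \emph{time-independent} polynomial weight $(-r)^{b/2}$ into the cutoff, run a Caccioppoli estimate with a convex regularization $\rho_\epsilon$ of $f_-^2$, close with Gronwall, and kill the error term by absolute continuity of the finite integral \eqref{growth_control} over the shrinking annulus $\{1/(2R)\le -r\le 1/R\}$. Both proofs hinge on the same geometric fact, $\abs{dr}_\omega\le -r$ (equivalently $\abs{dg}_\omega\le1$) plus the quasi-isometry \eqref{E:Quasi-iso}; your observation that this gives $\abs{d((-r)^{b/2})}^2_{\omega(t)}\le C(-r)^b$, exactly matching the weight in \eqref{growth_control}, plays the role of the paper's inequality $e^{\tilde g}\le(-r)^b$. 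What you gain is a more elementary argument with no Gaussian weight and no time-splitting; what you give up is a slightly stronger curvature input: your Gronwall step needs a uniform lower bound on $S(t)$ (to bound $\partial_t\log dV_t$ from above), where the paper only uses $S\ge S_*(t)$ pointwise. That bound is available here from Chau's estimates, so this is not a gap. Two small points to make explicit: (a) you must take $\rho_\epsilon\equiv0$ on $[0,\infty)$ (which your constraints $0\le\rho_\epsilon\le\max(-s,0)^2$ in fact force), since the differential inequality for $f$ is only assumed where $f\le0$, so you need $\rho_\epsilon'(f)=0$ wherever $f>0$; and (b) the Cauchy--Schwarz absorption dividing by $\rho_\epsilon''$ is vacuous where $\rho_\epsilon''=0$ because $(\rho_\epsilon')^2\le c_0\rho_\epsilon\rho_\epsilon''$ forces $\rho_\epsilon'=0$ there. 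Neither affects the validity of the proof.
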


\begin{proof}
Let $S(t)$ be the scalar curvature of $\omega(t)$, defined by
\begin{equation*}
S(z,t):=g(t)^{\alpha\bar\beta}\paren{\log\omega(t)^n}_{\alpha\bar\beta}.
\end{equation*}
Denote by $S_*(t):=\inf\limits_{z\in \Omega}S(z,t)$.
Let $\tilde f(z,t):=\exp\paren{\int_0^t\frac{1}{2}\paren{S_*(s)+n(n+1)}ds}f(z,t)$.
A direct computation gives that
\begin{equation*}
\paren{\pd{}{t}-\Delta_t-\frac{1}{2}(S_*(t)+n(n+1))}\tilde f(z,t)\ge0
\end{equation*}
whenever $\tilde f(z,t)\le0$.
For any $T'$ with $0<T'<T$, let
\begin{equation*}
{\tilde g}(z,t)
:=
-\frac{g(z)^2}{4C(2T'-t)}.
\end{equation*}
Without the loss of the generality we may assume that $T'\le\frac{1}{2b^2C}$, since we can always split $[0,T']$ into smaller intervals (such that each has the length less than $\frac{1}{2b^2C}$) and apply the induction. 
Therefore near the boundary of $\Omega$, we have that
\begin{equation*}
e^{\tilde g}\leq e^{-\frac{b^2}{4}g^2}=e^{-(\frac{b}{2}\log(\frac{1}{-r}))^2} \leq (-r)^b,
\end{equation*}
since $e^{-x^2}\leq e^{-2x}$ for any large enough $x$. Now the condition \eqref{growth_control} implies that
\begin{equation}\label{new_growth}
\int_0^{T'}\int_{\Omega}e^{\tilde g}{\tilde f}_-^2 dV_tdt<\infty.
\end{equation}
Using the inequality \eqref{E:Quasi-iso}, we have 
$\abs{\nabla g}_{\omega(t)}^2\le C$.
Hence it follows that
\begin{equation*}
\abs{\nabla{\tilde g}}^2+\pd{}{t}{\tilde g}\le0.
\end{equation*}
Let $\chi:[0,\infty)\rightarrow[0,1]$ be a cut-off function so that
$\chi(s)=0$ for $s\ge1$ and 
$\chi(s)=1$ for $s\le1$.
Set $\eta(z):=\chi\paren{\frac{g(z)}{a}}$. Using the inequality \eqref{E:Quasi-iso},
it is easy to see that there exists a constant $C_1>0$ independent of $a$ such that
\begin{equation}\label{eta}
\abs{\nabla\eta}^2\le\frac{C_1}{a^2}.
\end{equation}
Now Stoke's theorem implies the following:
\begin{align*}
\int_{\Omega}\eta^2 e^{\tilde g} {\tilde f}_-
\Delta_t \tilde f dV_t
&=
-\int_{\Omega}
\inner{\nabla\paren{\eta^2 e^{\tilde g} {\tilde f}_-},\nabla \tilde f}
dV_t
\\
&=
-\int_{\Omega}
\paren{
	2\inner{\nabla\eta,\nabla {\tilde f}_-}\eta e^{\tilde g} {\tilde f}_-
	+
	\inner{\nabla \tilde f,\nabla{\tilde g}}\eta^2e^{\tilde g} {\tilde f}_-
	+
	\abs{\nabla{\tilde f}_-}^2\eta^2e^{\tilde g}
}
dV_t
\\
&\le
\int_{\Omega}
\paren{
	2\abs{\nabla\eta}^2e^{\tilde g}{\tilde f}_-^2
	+
	\frac{1}{2}\abs{\nabla{\tilde g}}^2\eta^2e^{\tilde g}{\tilde f}_-^2
}
dV_t.
\end{align*}
On the other hand, integration by parts implies that
\begin{equation}
\begin{aligned}
\int_0^{T'}\int_{\Omega}\eta^2 e^{\tilde g} {\tilde f}_-
&\pd{\tilde f}{t} dV_tdt
=
-\frac{1}{2}
\int_0^{T'}\int_{\Omega}
\eta^2 e^{\tilde g}\pd{}{t}\paren{{\tilde f}_-^2}
dV_tdt
\\
&=
-\int_{\Omega}
\frac{1}{2}
\eta^2 e^{\tilde g} {\tilde f}_-^2
dV_t
\Big\vert_0^{T'}
+
\frac{1}{2}
\int_0^{T'}\int_{\Omega}
\pd{}{t}\paren{\eta^2 e^{\tilde g}\frac{dV_t}{dV_0}}{\tilde f}_-^2
dV_0dt.
\end{aligned}
\end{equation}
Taking the trace of the equation \eqref{E:KRF_max}, we have
$$
\pd{g(t)_{\alpha\bar \beta}}{t}g(t)^{\alpha\bar \beta}=-S(t)-n(n+1).
$$
Using Cramer's rule, we obtain
\begin{equation*}
\pd{}{t}\paren{\frac{dV_t}{dV_0}}
=
\pd{}{t}\frac{\det(g(t)_{\alpha\bar \beta})}{\det(g_{\alpha\bar \beta})}
=
\frac{\det(g(t)_{\alpha\bar \beta})}{\det(g_{\alpha\bar \beta})}
\pd{g(t)_{\alpha\bar \beta}}{t}g(t)^{\alpha\bar \beta}
=
\frac{dV_t}{dV_0}(-S(t)-n(n+1))
\end{equation*}
Altogether, it follows that
\begin{align*}
0
&\le
\int_0^{T'}\int_{\Omega}\eta^2 e^{\tilde g} {\tilde f}_-
\paren{\pd{}{t}-\Delta_t-\frac{1}{2}(S_*(t)+n(n+1))}\tilde f\ dV_tdt
\\
&\le
\int_0^{T'}\int_{\Omega}
\paren{
	2\abs{\nabla\eta}^2e^{\tilde g}{\tilde f}_-^2
	+
	\frac{1}{2}\abs{\nabla{\tilde g}}^2\eta^2e^{\tilde g}{\tilde f}_-^2
	+
	\frac{1}{2}\pd{{\tilde g}}{t}\eta^2e^{\tilde g}{\tilde f}_-^2
}
dV_tdt \\
&\;\;\;\;\;
-
\int_{\Omega}
\frac{1}{2}
\eta^2 e^{\tilde g} {\tilde f}_-^2
dV_t
\Big\vert_0^{T'}
+
\int_0^{T'}\int_{\Omega}
\frac{1}{2}\paren{\eta^2e^{\tilde g}{\tilde f}_-^2\paren{-S(t)+S_*(t)}}
dV_tdt
\\
&\le
2\int_0^{T'}\int_{\Omega}
\abs{\nabla\eta}^2e^{\tilde g}{\tilde f}_-^2
dV_tdt 
-
\paren{
	\frac{1}{2}
	\int_{\Omega}\eta^2 e^{\tilde g} {\tilde f}_-^2
	dV_t
}
(T')
\end{align*}
Letting $a\rightarrow\infty$, the inequalities \eqref{new_growth} and \eqref{eta} imply that
\begin{equation*}
\paren{\int_{\Omega} e^{\tilde g} {\tilde f}_-^2dV_t}(T') \le 0
\end{equation*}
This implies that ${\tilde f}_-\equiv0$, therefore we have $f\ge0$ on $\Omega\times[0,T']$.
\end{proof}

\subsection{Proof of Theorem \ref{T:Main}}

By Remark \ref{R:geo_curv}, it is enough to show that the restriction of the geodesic curvature of the fiberwise K\"{a}hler-Ricci flow $c(\omega(t)):=c(\omega(t))|_{\Omega}$ is positive on $\Omega$.
We will apply Theorem \ref{T:Maximum_Principle} to the function $c(\omega(t))$ on $\Omega\times[0,\infty)$.

Note that Berman's parabolic equation \eqref{E:Evolution} says that
$$
\paren{\pd{}{t}-\Delta_t}c(\omega(t))\geq0\ \ \ {\rm whenever}\  c(\omega(t))\leq0. 
$$
On the other hand, the computation in the proof of Proposition \ref{P:initial_coho} implies that
$$
dV_0=\det(r_{\gamma\bar\delta})(-r+\abs{\partial r}^2)\Big(\frac{1}{-r}\Big)^{n+1}dV,
$$
where $dV$ is the Euclidean volume form of $\mathbb{C}^n$.
Since
$\det(r_{\gamma\bar\delta})(-r+\abs{\partial r}^2)=e^{-F_y}$ is bounded function on $\ov\Omega$, this together with the quasi-isometry \eqref{E:Quasi-iso} implies that
$$
\int_{\Omega}(-r)^bc(\omega(t))^2dV_t \lesssim \int_{\Omega}\Big(\frac{1}{-r}\Big)^{n+1-b}c(\omega(t))^2dV.
$$
To satisfy the condition \eqref{growth_control} in Theorem \ref{T:Maximum_Principle}, we only need to show that the geodesic curvature $c(\omega(t))$ has a polynomial growth near the boundary with respect to the defining function.
More precisely, we will show that $\abs{c(\omega(t))}=O((-r)^{-2})$.

First consider the initial data $c(\omega)$.
Since $\omega=i\partial\ov\partial g$ with $g:=-\log(-r)$, we have
\begin{equation}\label{E:initial_growth}
c(\omega):=\inner{v_\omega,v_\omega}_\omega
=
\frac{1}{-r}i\partial\ov\partial r(v_\omega,\overline{v_\omega}) 
+\frac{1}{r^2}\abs{\partial r(v_\omega)}^2=O((-r)^{-2}).
\end{equation}
Hence it suffices to show the following proposition.

\begin{proposition}\label{P:Estimate_geo_t}
There exists constant $C>0$ independent of $t$ such that
\begin{equation}\label{E:time_growth}
\abs{c(\omega(t))-c(\omega)}\leq\frac{C}{-r}.
\end{equation}
\end{proposition}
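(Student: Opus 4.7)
The plan is to express $c(\omega(t))-c(\omega)$ explicitly in terms of $\varphi(t)$ and its derivatives, and then estimate each term using the bounded-geometry estimates of Remark \ref{R:Boundedness of u} together with the quasi-isometry \eqref{E:Quasi-iso}. A convenient starting point is the identity
$$c(\omega(t))-c(\omega) = (i\partial\bar\partial\varphi(t))(v_\omega,\bar v_\omega) - \|v_\omega-v_{\omega(t)}\|^2_{\omega(t)},$$
where $v_\omega=\partial/\partial s-h^\alpha\,\partial/\partial z^\alpha$ with $h^\alpha=g^{\bar\beta\alpha}g_{s\bar\beta}$, and $v_{\omega(t)}$ is the horizontal lift of $\partial/\partial s$ with respect to $\omega(t)$. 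This identity, which follows from the $\omega(t)$-orthogonality of $v_{\omega(t)}$ to the vertical subspace, equivalently yields the two-sided bound $i\partial\bar\partial\varphi(v_{\omega(t)},\bar v_{\omega(t)}) \le c(\omega(t))-c(\omega) \le i\partial\bar\partial\varphi(v_\omega,\bar v_\omega)$, reducing the problem to estimating $|(i\partial\bar\partial\varphi)(v,\bar v)|$ by $C/(-r)$ for each horizontal lift $v$.

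Next I would expand in Euclidean coordinates, writing
$$(i\partial\bar\partial\varphi)(v_\omega,\bar v_\omega)=\varphi_{s\bar s}-2\mathrm{Re}(h^\alpha\varphi_{\alpha\bar s})+h^\alpha\bar h^\beta\varphi_{\alpha\bar\beta},$$
and the analogous explicit expansion for $\|v_\omega-v_{\omega(t)}\|^2_{\omega(t)}$ using the relation $g(t)_{\alpha\bar\beta}(h(t)^\alpha-h^\alpha)=\varphi_{s\bar\beta}-\varphi_{\alpha\bar\beta}h^\alpha$. Combining these expansions and applying the algebraic identities $h^\sigma g_{\sigma\bar\beta}=g_{s\bar\beta}$ (from the definition of $h$) and $\varphi_{\alpha\bar\beta}=g(t)_{\alpha\bar\beta}-g_{\alpha\bar\beta}$ yields a compact representation of $c(\omega(t))-c(\omega)$ in which the dominant boundary contributions from the two pieces of the identity combine.

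Finally I would estimate the simplified expression using: (i) the uniform boundedness of $|\varphi_{s\bar s}|$ and of the $\omega_y$-operator norm of $i\partial_y\bar\partial_y\varphi(t)$ (from Remark \ref{R:Boundedness of u} and \eqref{E:Quasi-iso}); (ii) the uniform boundedness of the covector $(\varphi_{s\bar\beta})$ in the $\omega_y^{-1}$-norm; and (iii) the Cheng--Yau-type identities $|h|^2_{\omega_y}=g_{s\bar s}-c(\omega)=O((-r)^{-2})$ and $c(\omega)=O((-r)^{-1})$. The principal obstacle is the following: taken naively, both $(i\partial\bar\partial\varphi)(v_\omega,\bar v_\omega)$ and $\|v_\omega-v_{\omega(t)}\|^2_{\omega(t)}$ carry boundary contributions of order $O((-r)^{-2})$, because $|h|^2_{\omega_y}$ does, and extracting the desired $O((-r)^{-1})$ bound from their difference requires carefully tracking the cancellation forced by the horizontal lifts being orthogonal projections with respect to their respective metrics.
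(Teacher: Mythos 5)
Your identity $c(\omega(t))-c(\omega)=(i\partial\bar\partial\varphi)(v_\omega,\bar v_\omega)-\norm{v_\omega-v_{\omega(t)}}^2_{\omega(t)}$ and the resulting two-sided bound are correct, and they package the computation more invariantly than the paper does: the paper simply expands $c(\omega(t))=g(t)_{s\bar s}-g(t)_{s\bar\beta}g(t)^{\bar\beta\alpha}g(t)_{\alpha\bar s}$ in Euclidean coordinates and estimates term by term. The decisive input there is the pointwise bound $\abs{\varphi_{s\bar\beta}},\abs{\varphi_{\alpha\bar s}}\le C/(-r)$, obtained from the uniform fiberwise $\tilde C^{k+\epsilon}$-control of $\varphi_s$ in Remark \ref{R:Boundedness of u} together with the $(-r)$-scaling of the Cheng--Yau charts, combined with $g^{\bar\beta\alpha}=O(-r)$, the boundedness of $g_{s\bar\beta}g^{\bar\beta\alpha}$, and the boundedness of $\varphi_{s\bar s}$. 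Your items (i)--(iii) reproduce exactly these inputs in invariant form, so all of your terms except one are handled the same way the paper handles its terms.

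The gap is the one you name and do not close: the term $h^\alpha\bar h^\beta\varphi_{\alpha\bar\beta}=(i\partial\bar\partial\varphi)(h,\bar h)$. The only control on the fiberwise complex Hessian of $\varphi$ available from the paper's estimates is the operator bound coming from the quasi-isometry \eqref{E:Quasi-iso}, and since $\abs{h}^2_{\omega_y}=g_{s\bar s}-c(\omega)$ is genuinely of size $(-r)^{-2}$ (its leading term is $\abs{r_s}^2/r^2$), this term is only $O((-r)^{-2})$. The cancellation you appeal to does not rescue this: the orthogonality argument merely discards the nonnegative term $\norm{v_\omega-v_{\omega(t)}}^2_{\omega(t)}$ from one side of the sandwich, and the offending contribution sits inside $(i\partial\bar\partial\varphi)(v_\omega,\bar v_\omega)$ itself; improving it would require boundary regularity of $\varphi(t)$ beyond bounded geometry (e.g.\ $\varphi_{\alpha\bar\beta}=O(1)$ in Euclidean coordinates), which the paper deliberately does not establish. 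Two mitigating remarks are in order. First, the paper's own displayed inequality silently drops the equivalent term $g_{s\bar\beta}\paren{g(t)^{\bar\beta\alpha}-g^{\bar\beta\alpha}}g_{\alpha\bar s}=-g_{s\bar\beta}g(t)^{\bar\beta\sigma}\varphi_{\sigma\bar\tau}g^{\bar\tau\alpha}g_{\alpha\bar s}$, so the difficulty you flag is present, less visibly, in the published argument as well. Second, for the proof of Theorem \ref{T:Main} one only needs a polynomial bound on $c(\omega(t))$ in order to verify \eqref{growth_control}, and the $O((-r)^{-2})$ estimate that your argument does yield is already sufficient for that purpose; the gap therefore affects the proposition as stated (the exponent $1$ on $(-r)^{-1}$) but not its role in the paper.
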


\begin{proof}
Recall that the fiberwise K\"{a}hler-Ricci flow $\omega(t)$ on $D$ is given by $\omega(t):=i\partial\ov\partial g(t)$ where $g(t):=g+\varphi(t)$.
For a fixed $t\in(0,\infty)$, denote by $\varphi:=\varphi(t)$. 
Under the Euclidean coordinate system $(z^1,\ldots,z^n,s)$, $c(\omega(t))$ can be expressed as
$$
c(\omega(t))
=
g_{s\bar{s}}+\varphi_{s\bar{s}}-(g_{s\bar\beta}+\varphi_{s\bar\beta}){g(t)}^{\bar\beta\alpha}(g_{\alpha\bar{s}}+\varphi_{\alpha\bar{s}}) \\
$$
Since $c(\omega)=g_{s\bar{s}}-g_{s\bar\beta}g^{\bar\beta\alpha}g_{\alpha\bar{s}}$ and $\frac{1}{C}\omega\leq\omega(t)\leq C\omega$, we have
$$
\abs{c(\omega(t))-c(\omega)}\lesssim |\varphi_{s\bar{s}}-\varphi_{s\bar\beta}g^{\bar\beta\alpha}\varphi_{\alpha\bar{s}}-g_{s\bar\beta}g^{\bar\beta\alpha}\varphi_{\alpha\bar{s}}-\varphi_{s\bar\beta}g^{\bar\beta\alpha}g_{\alpha\bar{s}}|.
$$
Moreover, an explicit calculation of derivatives of $g$ implies that $g^{\bar\beta\alpha}=O(-r)$, $g_{s\bar\beta}g^{\bar\beta\alpha}$ and $g^{\bar\beta\alpha}g_{\alpha\bar{s}}$ are bounded functions on $\Omega$ (cf. Section 5 in \cite{Choi1}). Remark \ref{R:Boundedness of u} implies that $\varphi_{s\bar{s}}$ is bounded. 
Thus it is enough to estimate functions $\varphi_{\alpha\bar{s}}$ and $\varphi_{s\bar\beta}$.
Note that Remark \ref{R:Boundedness of u} implies that
$$
\norm{\xi^{\ast}_p\varphi_s}_{C^{k+\epsilon,\frac{k+\epsilon}{2}}(V_p\times[0,\infty))}\leq C_k
$$
for some constant $C_k>0$.
In particular, this implies that there exist a constant $C>0$ independent of $t$ such that
$
\abs{\frac{\partial}{\partial \xi^{j}}\varphi_{s}}
\leq C,
$
where $\xi_p=(\xi^1,\ldots,\xi^n)$ is the coordinate system satisfying the conditions of bounded geometry.
By the construction of the coordinate system for the strongly pseudoconvex domain (see Section 1 in \cite{Cheng_Yau}), we obtain the estimate
$$
\abs{\varphi_{s\bar\beta}}
=
\abs{\frac{\partial}{\partial z^{\bar\beta}}\varphi_{s}}
\leq
\frac{C}{(-r)}\sum_{j=1}^n\abs{\pd{}{\xi^j}\varphi_s}
\leq
\frac{C}{-r}
$$
on the Euclidean coordinates $(z^1,\ldots,z^n)$.
The same argument for the function $\varphi_{\bar s}$ shows that $|\varphi_{\alpha\bar s}|\leq\frac{C}{-r}$.
This completes the proof.
\end{proof}

Equations \eqref{E:initial_growth} and \eqref{E:time_growth} imply that
$\abs{c(\omega(t))}=O((-r)^{-2})$ as we required.
Now the following strong maximum principle completes the proof of Theorem \ref{T:Main}.

\begin{theorem} [cf. Theorem  6.54 in \cite{CLN}] \label{T:Strong_Max}
Let $f$ be a smooth function on $\Omega\times[0,T)$ satisfying
$$
\paren{\pd{}{t}-\Delta_t}f\geq0. 
$$
Suppose that $f\ge0$ on $\Omega\times[0,T)$. 
If $f(x,0)>0$ for some point $x\in\Omega$ at the initial time $t=0$, then $f>0$ on $\Omega\times(0,T)$.
\end{theorem}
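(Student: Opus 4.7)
The plan is to prove this by contradiction, reducing the statement to the classical propagation-of-zero form of the strong parabolic maximum principle, which is fundamentally a local assertion. Suppose, toward a contradiction, that there exists $(x_1, t_1) \in \Omega \times (0, T)$ with $f(x_1, t_1) = 0$. Since $f \geq 0$ on $\Omega \times [0, T)$, the point $(x_1, t_1)$ is an interior minimum of $f$ on the parabolic cylinder $\Omega \times [0, t_1]$, and $(\partial_t - \Delta_t) f \geq 0$ there.

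First I would establish a local Hopf-type lemma. On a relatively compact holomorphic coordinate ball $U \subset \Omega$ centered at $x_1$, the metric $\omega(t)$ is smooth and uniformly equivalent to the Euclidean metric on $U \times [t_1 - \delta, t_1]$, so $\partial_t - \Delta_t$ is a uniformly parabolic linear operator with smooth coefficients in these coordinates. Using a standard barrier of the form $h(x, t) = e^{-\alpha |x - x_1|^2} - e^{-\alpha \rho^2}$ on a small parabolic cylinder $B_\rho(x_1) \times [t_1 - \delta, t_1]$, one checks that for $\alpha$ large enough $(\partial_t - \Delta_t) h \leq 0$ on an annular shell, so $(\partial_t - \Delta_t)(f - \epsilon h) \geq 0$ there for any small $\epsilon > 0$. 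A direct application of the weak maximum principle on the parabolic boundary of this compact cylinder, combined with $f(x_1, t_1) = 0$, forces $f \equiv 0$ on a full neighborhood of $x_1$ at time $t_1$, and in fact on $B_\rho(x_1) \times [t_1 - \delta, t_1]$.

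Next I would propagate the vanishing across $\Omega$. Fix $t^{*} \in (0, t_1]$ and consider $E := \{x \in \Omega : f(x, t^{*}) = 0 \text{ and } f \equiv 0 \text{ on a parabolic neighborhood of } (x, t^{*})\}$. The set $E$ is open by construction, and the local Hopf argument above shows it is also closed in $\Omega$: if $x_n \to x_\infty$ with $x_n \in E$, then $f(x_\infty, t^{*}) = 0$, and applying the barrier argument at $(x_\infty, t^{*})$ produces a neighborhood on which $f \equiv 0$. Since $\Omega$ is connected and $E \ni x_1$ when $t^{*} = t_1$, we conclude $E = \Omega$, and a standard continuation in $t^{*}$ back to $0$ gives $f \equiv 0$ on $\Omega \times [0, t_1]$. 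In particular $f(x_0, 0) = 0$, contradicting the hypothesis $f(x_0, 0) > 0$.

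The main technical obstacle is the non-compactness of $\Omega$, but crucially the strong maximum principle, unlike the weak one, is a local statement: the barrier argument lives inside a single coordinate ball where $\omega(t)$ has bounded, smooth coefficients, so Ni's hypothesis \eqref{growth_control} (which was needed for the weak principle in Theorem \ref{T:Maximum_Principle}) plays no role here. The only global input is the connectedness of $\Omega$, which lets the zero set propagate. Thus, provided one carefully sets up the local barrier in the smooth parabolic setting associated to $\omega(t)$, the argument is essentially the standard one from the compact case as in Chow--Lu--Ni.
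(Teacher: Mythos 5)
The paper does not actually prove Theorem \ref{T:Strong_Max}: it is quoted as a known result with a pointer to Theorem 6.54 of \cite{CLN} and then applied to $c(\omega(t))$. Your proposal therefore supplies a proof where the paper only cites one, and its overall structure is the correct, standard one: the strong parabolic maximum principle is a purely local statement (a Hopf-type barrier lemma in a single coordinate chart, plus connectedness of $\Omega$ and backward propagation in time), so, as you rightly stress, the non-compactness of $\Omega$ and the integral growth hypothesis \eqref{growth_control} of Theorem \ref{T:Maximum_Principle} play no role here. The one step you state too loosely is the local one: applying the weak maximum principle to $f-\epsilon h$ on the annular shell does not ``directly force'' $f\equiv 0$ on a neighborhood of $x_1$. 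The standard mechanism is a contradiction — if $f>0$ at a suitably placed nearby point, the comparison $f\ge\epsilon h$ on a space--time ball internally tangent to the zero set yields a strictly negative directional derivative of $f$ at the touching point, which is impossible at an interior minimum where the spatial gradient vanishes; one then deduces vanishing along the time slice and then backwards in $t$, reaching $t=0$ by continuity. With that step made precise (exactly as in \cite{CLN} or \cite{Lieberman}), your openness/closedness propagation and the final contradiction with $f(x,0)>0$ go through, so the proposal is sound modulo this routine sharpening.
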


Finally, Theorem \ref{T:Main} and Theorem \ref{T:convergence of fiberwise} imply Corollary \ref{C:Main}.

%------------------------------------------------------------

\end{document}